\newtheorem{problem}{Problem}
\newtheorem{theorem}{Theorem}
\newtheorem{proposition}{Proposition}
\newtheorem{definition}{Definition}
\newtheorem{assumption}{Assumption}
\title{Modified memoryless spectral-scaling Broyden family on Riemannian manifolds}
\author{Hiroyuki Sakai \and Hideaki Iiduka}
\begin{document}
\maketitle

\begin{abstract}
This paper presents modified memoryless quasi-Newton methods based on the spectral-scaling Broyden family on Riemannian manifolds.
The method involves adding one parameter to the search direction of the memoryless self-scaling Broyden family on the manifold.
Moreover, it uses a general map instead of vector transport.
This idea has already been proposed within a general framework of Riemannian conjugate gradient methods where one can use vector transport, scaled vector transport, or an inverse retraction.
We show that the search direction satisfies the sufficient descent condition under some assumptions on the parameters.
In addition, we show global convergence of the proposed method under the Wolfe conditions.
We numerically compare it with existing methods, including Riemannian conjugate gradient methods and the memoryless spectral-scaling Broyden family.
The numerical results indicate that the proposed method with the BFGS formula is suitable for solving an off-diagonal cost function minimization problem on an oblique manifold.
\end{abstract}

\section{Introduction}\label{sec:introduction}
Riemannian optimization has recently attracted a great deal of attention and has been used in many applications,
including low-rank tensor completion \cite{vandereycken2013low, kressner2014low},
machine learning \cite{nickel2017poincare},
and shape analysis \cite{huang2016riemannian}.

Iterative methods for solving unconstrained optimization problems on the Euclidean space have been studied for a long time \cite{nocedal1999numerical}.
Quasi-Newton methods and nonlinear conjugate gradient methods are the especially important ones and have been implemented in various software packages.

Here, quasi-Newton methods need to store dense matrices,
so it is difficult to apply them to large-scale problems.
Shanno \cite{shanno1978conjugate} proposed a memoryless quasi-Newton method as a way to deal with this problem.
This method \cite{kou2015modified, moyi2016sufficient, nakayama2018hybrid, nakayama2018memoryless, nakayama2019memoryless} has proven effective at solving large-scale unconstrained optimization problems.
The concept is simple:
an approximate matrix is updated by using the identity matrix instead of the previous approximate matrix.
Similar to the case of nonlinear conjugate gradient methods,
the search direction can be computed without having to use matrices and simply by taking the inner product without matrices.

Kou and Dai \cite{kou2015modified} proposed a modified memoryless spectral-scaling BFGS method.
Their method involves adding one parameter to the search direction of the memoryless self-scaling BFGS method.
In \cite{nakayama2018hybrid}, Nakayama used this technique to devise a memoryless spectral-scaling Broyden family.
In addition, he showed that the search direction is a sufficient descent direction and has the global convergence property.
Nakayama, Narushima, and Yabe \cite{nakayama2019memoryless} proposed memoryless quasi-Newton methods based on the spectral-scaling Broyden family \cite{chen2013spectral}.
Their methods generate a sufficient descent direction and have the global convergence property.

Many useful iterative methods for solving unconstrained optimization problems on manifolds have been studied (see \cite{absil2008optimization, sato2021riemannian}).
They have been obtained by extending iterative methods in Euclidean space by using the concepts of retraction and vector transport.
For example, Riemannian quasi-Newton methods \cite{huang2015broyden, huang2018riemannian} and Riemannian conjugate gradient methods \cite{sato2015new, zhu2020riemannian, sakai2020hybrid, sato2021riemannian} have been developed.
Sato and Iwai \cite{sato2015new} introduced scaled vector transport \cite[Definition 2.2]{sato2015new} in order to remove the assumption of isometric vector transport from the convergence analysis.
Zhu and Sato \cite{zhu2020riemannian} proposed Riemannian conjugate gradient methods that use an inverse retraction instead of vector transport.
In \cite{sato2021riemannian}, Sato proposed a general framework of Riemannian conjugate gradient methods.
This framework uses a general map instead of vector transport and utilizes the existing Riemannian conjugate gradient methods such as ones that use vector transport,
scaled vector transport \cite{sato2015new}, or inverse retraction \cite{zhu2020riemannian}.

In \cite{ring2012optimization}, Ring and Wirth proposed the BFGS method, which has a global convergence property under some convexity assumptions.
Narushima et al. \cite{narushima2023memoryless} proposed memoryless quasi-Newton methods based on the spectral-scaling Broyden family on Riemannian manifolds.
They extended the memoryless spectral-scaling Broyden family in Euclidean space to Riemannian manifolds with an additional modification to ensure a sufficient descent condition.
Moreover, they presented a global convergence analysis under the Wolfe conditions. In particular, they did not assume convexity of the objective function or isometric vector transport.
The results of the previous studies are summarized in Tables \ref{tab:quasi} and \ref{tab:memoryless}.

In this paper, we propose a modified memoryless quasi-Newton method based on the spectral-scaling Broyden family on Riemannian manifolds, exploiting the idea used in the paper \cite{nakayama2018hybrid}.

In the case of Euclidean space, Nakayama \cite{nakayama2018hybrid} reported that
the modified memoryless quasi-Newton method based on the spectral-scaling Broyden family shows good experimental performance with parameter tuning.
Therefore, it is worth extending it to Riemannian manifolds.
Our method is based on the memoryless quasi-Newton methods on Riemannian manifolds proposed by Narushima et al. \cite{narushima2023memoryless} as well as on the modification by Kou and Dai \cite{kou2015modified}.
It uses a general map to transport vectors similarly to the general framework of Riemannian conjugate gradient methods \cite{sato2022riemannian}.
This generalisation allows us to use maps such as an inverse retraction \cite{zhu2020riemannian} instead of vector transport.
We show that our method generates a search direction satisfying the sufficient descent condition under some assumptions on the parameters (see Proposition \ref{prop:sufficient}).
Moreover, we present global convergence analyses under the Wolfe conditions (see Theorem \ref{thm:convergence}).
Furthermore, we describe the results of numerical experiments comparing our method with the existing ones,
including Riemannian conjugate gradient methods \cite{sakai2020hybrid} and the memoryless spectral-scaling Broyden family on Riemannian manifolds \cite{narushima2023memoryless}.
The key advantages of the proposed methods are
the added parameter $\xi_{k-1}$ and support for maps other than vector transports.
As shown in the numerical experiments,
the proposed method may outperform the existing methods depending on how the parameter $\xi_{k-1}$ is chosen.
It has an advantage over \cite{narushima2023memoryless} in that it can use a map such as an inverse retraction, which is not applicable in \cite{narushima2023memoryless}.

This paper is organized as follows.
Section \ref{sec:preliminaries} reviews the fundamentals of Riemannian geometry and Riemannian optimization.
Section \ref{sec:method} proposes the modified memoryless quasi-Newton method based on the spectral-scaling Broyden family.
Section \ref{sec:analysis} gives a global convergence analysis.
Section \ref{sec:numerical} compares the proposed method with the existing methods through numerical experiments.
Section \ref{sec:conclusion} concludes the paper.

\begin{table}[htbp]
\caption{Results of previous studies on Quasi-Newton methods in Euclidean space and Riemannian manifolds.\label{tab:quasi}}
\centering
\begin{tabular}{lccc}
\bottomrule
& \multirow{2}{*}{BFGS} & \multirow{2}{*}{Broyden family} & spectral-scaling \\
& & & Broyden family \\
\hline
\multirow{2}{*}{Euclidean} & \multirow{2}{*}{------} & \multirow{2}{*}{------} & Chen--Cheng \\
& & & (2013) \cite{chen2013spectral} \\
\hline
\multirow{4}{*}{Riemannian} & Ring--Wirth & & \multirow{4}{*}{------} \\
& (2012) \cite{ring2012optimization} & Huang et al. & \\
& Huang et al. & (2015) \cite{huang2015broyden} & \\
& (2018) \cite{huang2018riemannian} & & \\
\toprule
\end{tabular}
\end{table}

\begin{table}[htbp]
\caption{Results of previous studies and ours on memoryless quasi-Newton methods in Euclidean space and Riemannian manifolds.\label{tab:memoryless}}
\centering
\begin{tabular}{lcc}
\bottomrule
 & spectral-scaling & modified spectral-scaling \\
 & Broyden family & Broyden family \\
\hline
\multirow{2}{*}{Euclidean}  & Nakayama et al. & Nakayama \\
 & (2019) \cite{nakayama2019memoryless} & (2018) \cite{nakayama2018hybrid} \\
\hline
\multirow{2}{*}{Riemannian} & Narushima et al. & \multirow{2}{*}{this work} \\
 & (2023) \cite{narushima2023memoryless} & \\
\toprule
\end{tabular}
\end{table}

\section{Mathematical preliminaries}\label{sec:preliminaries}
Let $M$ be a Riemannian manifold with Riemannian metric $g$.
$T_xM$ denotes the tangent space of $M$ at a point $x\in M$.
The tangent bundle of $M$ is denoted by $TM$.
A Riemannian metric at $x\in M$ is denoted by
$\langle\cdot,\cdot\rangle_x:T_xM\times T_xM\to\mathbb{R}$.
The induced norm of a tangent vector $\eta\in T_xM$ is defined by $\lVert\eta\rVert_x:=\sqrt{\langle\eta,\eta\rangle_x}$.
For a given tangent vector $\eta\in T_xM$,
$\eta^\flat$ represents the flat of $\eta$, i.e.,
$\eta^\flat:T_xM\to\mathbb{R}:\xi\mapsto\langle\eta,\xi\rangle_x$.
Let $F:M\to N$ be a smooth map between smooth manifolds;
then, the derivative of $F$ at $x\in M$ is denoted by $\mathrm{D}F(x):T_xM\to T_{F(x)}N$.
For a smooth function $f:M\to\mathbb{R}$,
$\mathrm{grad}f(x)$ denotes the Riemannian gradient at $x\in M$,
i.e., a unique element of $T_xM$ satisfying
\begin{align*}
    \langle\mathrm{grad}f(x),\eta\rangle_x=\mathrm{D}f(x)[\eta],
\end{align*}
for all $\eta\in T_xM$. $\mathrm{Hess}f(x)$ denotes the Riemannian Hessian at $x\in M$, which is defined as
\begin{align*}
    \mathrm{Hess}f(x):T_xM\to T_xM:\eta\mapsto\nabla_\eta\mathrm{grad}f(x),
\end{align*}
where $\nabla$ denotes the Levi-Civita connection of $M$
(see \cite{absil2008optimization}).

\begin{definition}
Any smooth map $R:TM\to M$ is called a retraction on $M$
if it has the following properties.
\begin{itemize}
    \item $R_x(0_x)=x$, where $0_x$ denotes the zero element of $T_xM$;
    \item $\mathrm{D}R_x(0_x)=\mathrm{id}_{T_xM}$ with the canonical identification $T_{0_x}(T_xM)\simeq T_xM$,
\end{itemize}
where $R_x$ denotes the restriction of $R$ to $T_xM$.
\end{definition}

\begin{definition}
Any smooth map $\mathcal{T}:TM\oplus TM\to TM$ is called a vector transport on $M$ if it has the following properties.
\begin{itemize}
\item There exists a retraction $R$ such that $\mathcal{T}_\eta(\xi)\in T_{R_x(\eta)}M$ for all $x\in M$ and $\eta,\xi\in T_xM$;
\item $\mathcal{T}_{0_x}(\xi)=\xi$ for all $x\in M$ and $\xi\in T_xM$;
\item $\mathcal{T}_\eta(a\xi+b\zeta)=a\mathcal{T}_\eta(\xi)+b\mathcal{T}_\eta(\zeta)$ for all $x\in M$,
$a,b\in\mathbb{R}$ and $\eta,\xi,\zeta\in T_xM$,
\end{itemize}
where $\mathcal{T}_\eta(\xi):=\mathcal{T}(\eta,\xi)$.
\end{definition}

Let us consider an iterative method in Riemannian optimization.
For an initial point $x_0\in M$, step size $\alpha_k>0$,
and search direction $\eta_k\in T_{x_k}M$,
the $k$-th approximation to the solution is described as
\begin{align}\label{eq:iterative}
  x_{k+1}=R_{x_k}(\alpha_k\eta_k),
\end{align}
where $R$ is a retraction.
We define $g_k:=\mathrm{grad}f(x_k)$.
Various algorithms have been developed to determine the search direction $\eta_k$.
We say that $\eta_k$ is a sufficient descent direction
if the sufficient descent condition,
\begin{align}\label{eq:s-descent}
    \langle g_k,\eta_k\rangle_{x_k}\leq-\kappa\lVert g_k\rVert_{x_k}^2
\end{align}
holds for some constant $\kappa>0$.

In \cite{huang2015broyden, huang2018riemannian, narushima2023memoryless},
the search direction $\eta_k\in T_{x_k}M$ of Riemannian quasi-Newton methods
is computed as
\begin{align}\label{eq:search}
    \eta_k=-\mathcal{H}_k[g_k],
\end{align}
where $\mathcal{H}_k:T_{x_k}M\to T_{x_k}M$
is a symmetric approximation to $\mathrm{Hess}f(x_k)^{-1}$.

In \cite{sato2022riemannian},
Sato proposed a general framework of Riemannian conjugate gradient methods
by using a map $\mathscr{T}^{(k-1)}:T_{x_{k-1}}M\to T_{x_k}M$
which satisfies Assumption \ref{asm:map},
to transport $\eta_{k-1}\in T_{x_{k-1}}M$ to $T_{x_k}M$;
i.e., the search direction $\eta_k$ is computed as
\begin{align*}
    \eta_k=-g_k+\beta_k\sigma_{k-1}\mathscr{T}^{(k-1)}(\eta_{k-1}),
\end{align*}
where $\beta_k\in\mathbb{R}$, and $\sigma_{k-1}$ is a scaling parameter (see \cite[Section 4.1]{sato2022riemannian}) satisfying
\begin{align*}
    0<\sigma_{k-1}\leq\min\left\{1,\frac{\lVert\eta_{k-1}\rVert_{x_{k-1}}}{\lVert\mathscr{T}^{(k-1)}(\eta_{k-1})\rVert_{x_k}}\right\}.
\end{align*}

\begin{assumption}\label{asm:map}
    There exist $C \geq  0$ and $K\subset\mathbb{N}$, such that for all $k\in K$,
    \begin{align}\label{eq:mapStronger}
        \lVert\mathscr{T}^{(k)}(\eta_k)-\mathrm{D}R_x(\alpha_k\eta_k)[\eta_k]\rVert_{x_{k+1}}\leq C\alpha_k\lVert\eta_k\rVert_{x_k}^2,
    \end{align}
    and for all $k\in\mathbb{N}-K$,
    \begin{align}\label{eq:mapWeaker}
        \lVert\mathscr{T}^{(k)}(\eta_k)-\mathrm{D}R_x(\alpha_k\eta_k)[\eta_k]\rVert_{x_{k+1}}\leq C(\alpha_k+\alpha_k^2)\lVert\eta_k\rVert_{x_k}^2.
    \end{align}
\end{assumption}

Note that inequality \eqref{eq:mapWeaker} is weaker than \eqref{eq:mapStronger}.
For $k$ satisfying the stronger condition \eqref{eq:mapStronger},
the assumption of Theorem \ref{thm:zoutendijk} can be weakened.
Further details can be found in \cite[Remark 4.3]{sato2022riemannian}.
Assumption \ref{asm:map} requires that $\mathscr{T}^{(k)}$ is an approximation of the differentiated retraction.
Therefore, the differentiated retraction clearly satisfies the conditions of Assumption \ref{asm:map}.
In \cite[Example 4.5]{sato2022riemannian} and \cite[Example 4.6]{sato2022riemannian},
Sato gives examples of maps $\mathscr{T}^{(k)}$ satisfying Assumption \ref{asm:map} in the case of the unit sphere and Grassmann manifolds, respectively.
In \cite[Proposition 1]{zhu2020riemannian}, Zhu and Sato proved that the inverse of the retraction satisfies Assumption \ref{asm:map}.

Sato \cite[Section 4.3]{sato2022riemannian} generalized the parameter $\beta_k$ (i.e., Fletcher--Reeves (FR) \cite{sato2015new}, Dai--Yuan (DY) \cite{sato2016dai}, Polak--Ribi\`ere--Polyak (PRP) and Hestenes--Stiefel (HS) methods) as follows:
\begin{align}
    \beta_k^\mathrm{FR}&=\frac{\lVert g_k\Vert_{x_k}^2}{\lVert g_{k-1}\rVert_{x_{k-1}}^2},\nonumber\\
    \beta_k^\mathrm{DY}&=\frac{\lVert g_k\Vert_{x_k}^2}{\langle g_k,\sigma_{k-1}\mathscr{T}^{(k-1)}(\eta_{k-1})\rangle_{x_k}-\langle g_{k-1},\eta_{k-1}\rangle_{x_{k-1}}},\label{eq:DY}\\
    \beta_k^\mathrm{PRP}&=\frac{\lVert g_k\rVert_{x_k}^2-\langle g_k,l_{k-1}\mathscr{S}^{(k-1)}(g_{k-1})\rangle_{x_{k-1}}}{\lVert g_{k-1}\rVert_{x_{k-1}}^2},\nonumber\\
    \beta_k^\mathrm{HS}&=\frac{\lVert g_k\rVert_{x_k}^2-\langle g_k,l_{k-1}\mathscr{S}^{(k-1)}(g_{k-1})\rangle_{x_{k-1}}}{\langle g_k,\sigma_{k-1}\mathscr{T}^{(k-1)}(\eta_{k-1})\rangle_{x_k}-\langle g_{k-1},\eta_{k-1}\rangle_{x_{k-1}}},\nonumber
\end{align}
where $l_{k-1}>0$ and $\mathscr{S}^{(k-1)}:T_{x_{k-1}}M\to T_{x_k}M$ is an appropriate mapping. Therefore, we can use the Hager-Zhang (HZ) methods \cite[Section 3]{sakai2023global} generalized by the above techniques, as follows:
\begin{align}\label{eq:HZ}
    \beta_k^\mathrm{HZ}=\beta_k^\mathrm{HS}-\mu\frac{\lVert y_{k-1}\rVert_{x_k}^2\langle g_k,\mathscr{T}^{(k-1)}(\eta_{k-1})\rangle_{x_k}}{\left(\langle g_k,\sigma_{k-1}\mathscr{T}^{(k-1)}(\eta_{k-1})\rangle_{x_k}-\langle g_{k-1},\eta_{k-1}\rangle_{x_{k-1}}\right)^2},
\end{align}
where $\mu>1/4$.

We suppose that the search direction $\eta_k\in T_{x_k}M$ is a descent direction.
In \cite[Section 4.4]{sato2022riemannian}, Sato introduced the Riemannian version of the Wolfe conditions with a $\mathscr{T}^{(k)}:T_{x_k}M\to T_{x_{k+1}}M$,
called $\mathscr{T}^{(k)}$-Wolfe conditions.
$\mathscr{T}^{(k)}$-Wolfe conditions are written as
\begin{align}
    &f(R_{x_k}(\alpha_k\eta_k))\leq f(x_k)+c_1{\alpha_k}\langle g_k,\eta_k\rangle_{x_k},\label{eq:armijo}\\
    &\langle\mathrm{grad}f(R_{x_k}(\alpha_k\eta_k)),\mathscr{T}^{(k)}(\eta_k)\rangle_{R_{x_k}(\alpha_k\eta_k)}\geq c_2\langle g_k,\eta_k\rangle_{x_k}, \label{eq:wolfe}
\end{align}
where $0<c_1<c_2<1$.
Note that the existence of a step size $\alpha_k>0$ satisfying the $\mathscr{T}^{(k)}$-Wolfe conditions is discussed in
\cite[Section 4.4]{sato2022riemannian}.
Moreover, algorithms \cite[Algorithm 3]{sakai2021sufficient} and \cite[Section 5.1]{sato2016dai} exist for finding step sizes which satisfy the $\mathscr{T}^{(k)}$-Wolfe conditions.

\section{Memoryless spectral-scaling Broyden family}\label{sec:method}
Let us start by reviewing the memoryless spectral-scaling Broyden family in Euclidean space.
In the Euclidean setting, an iterative optimization algorithm updates the current iterate $x_k$ to the next iterate $x_{k+1}$ with the updating formula,
\begin{align*}
    x_{k+1}=x_k+\alpha_kd_k,
\end{align*}
where $\alpha_k>0$ is a positive step size.
One often chooses a step size $\alpha_k>0$ to satisfy the Wolfe conditions (see \cite{wolfe1969, wolfe1971}),
\begin{align*}
    f(x_k+\alpha_kd_k)\leq f(x_k)+c_1\alpha_k\nabla f(x_k)^\top d_k, \\
    \nabla f(x_k+\alpha_kd_k)^\top d_k\geq c_2\nabla f(x_k)\top d_k,
\end{align*}
where $0<c_1<c_2<1$.
The search direction $d_k$ of the quasi-Newton methods is defined by
\begin{align}\label{eq:Eu-search}
    d_k=-H_k\nabla f(x_k),
\end{align}
where $g_k=\nabla f(x_k)$ and $H_k$ is a symmetric approximation to $\nabla^2f(x_k)^{-1}$. In this paper,
we will focus on the Broyden family, written as
\begin{align}~\label{eq:Eu-Broyden}
    \begin{split}
    H_k&=H_{k-1}-\frac{H_{k-1}y_{k-1}y_{k-1}^\top H_{k-1}}{y_{k-1}^\top H_{k-1}y_{k-1}}+\frac{s_{k-1}s_{k-1}^\top}{s_{k-1}^\top y_{k-1}}\\
    &\quad +\phi_{k-1}y_{k-1}^\top H_{k-1}y_{k-1}w_{k-1}w_{k-1}^\top,
    \end{split}
\end{align}
where
\begin{align*}
    w_{k-1}=\frac{s_{k-1}}{s_{k-1}^\top y_{k-1}}-\frac{H_{k-1}y_{k-1}}{y_{k-1}^\top H_{k-1}y_{k-1}},
\end{align*}
$s_{k-1}=x_k-x_{k-1}$ and $y_{k-1}=\nabla f(x_k)-\nabla f(x_{k-1})$.
$\phi_{k-1}$ is a parameter,
which becomes the DFP formula when $\phi_{k-1}=0$ or the BFGS formula
when $\phi_{k-1}=1$ (see \cite{nocedal1999numerical, sun2006optimization}).
Here, if $\phi_{k-1}\in[0,1]$,
then \eqref{eq:Eu-Broyden} is a convex combination of the DFP formula and the BFGS formula;
we call this interval the convex class.
Zhang and Tewarson \cite{zhang1988quasi} found a better choice in the case $\phi_{k-1}>1$;
we call this interval the preconvex class.
In \cite{chen2013spectral}, Chen and Cheng proposed the Broyden family based on the spectral-scaling secant condition \cite{cheng2010spectral} as follows:
\begin{align}\label{eq:Eu-ssBroyden}
    \begin{split}
        H_k&=H_{k-1}-\frac{H_{k-1}y_{k-1}y_{k-1}^\top H_{k-1}}{y_{k-1}^\top H_{k-1}y_{k-1}}+\frac{1}{\tau_{k-1}}\frac{s_{k-1}s_{k-1}^\top}{s_{k-1}^\top y_{k-1}} \\
        &\quad +\phi_{k-1}y_{k-1}^\top H_{k-1}y_{k-1}w_{k-1}w_{k-1}^\top,
    \end{split}
\end{align}
where $\tau_{k-1}>0$ is a spectral-scaling parameter.

Shanno \cite{shanno1978conjugate} proposed memoryless quasi-Newton methods
in which $H_{k-1}$ is replaced with the identity matrix in \eqref{eq:Eu-Broyden}.
Memoryless quasi-Newton methods avoid having to make memory storage for matrices and can solve large-scale unconstrained optimization problems.
In addition, Nakayama, Narushima and Yabe \cite{nakayama2019memoryless} proposed memoryless quasi-Newton methods based on
the spectral-scaling Broyden family by replacing $H_{k-1}$ with the identity matrix in \eqref{eq:Eu-ssBroyden},
i.e.,
\begin{align}\label{eq:Eu-mlBroyden}
    H_k=I-\frac{y_{k-1}y_{k-1}^\top}{y_{k-1}^\top y_{k-1}}+\frac{1}{\tau_{k-1}}\frac{s_{k-1}s_{k-1}^\top}{s_{k-1}^\top y_{k-1}}+\phi_{k-1}y_{k-1}^\top y_{k-1}w_{k-1}w_{k-1}^\top,
\end{align}
where
\begin{align*}
    w_{k-1}=\frac{s_{k-1}}{s_{k-1}^\top y_{k-1}}-\frac{y_{k-1}}{y_{k-1}^\top y_{k-1}}.
\end{align*}
From \eqref{eq:Eu-search} and \eqref{eq:Eu-ssBroyden},
the search direction $d_k$ of memoryless quasi-Newton methods based on
the spectral-scaling Broyden family can be computed as
\begin{align*}
    d_k&=-g_k+\left(\phi_{k-1}\frac{y_{k-1}^\top g_k}{d_{k-1}^\top y_{k-1}}-\left(\frac{1}{\tau_{k-1}}+\phi_{k-1}\frac{y_{k-1}^\top y_{k-1}}{s_{k-1}^\top y_{k-1}}\right)\frac{s_{k-1}^\top g_k}{d_{k-1}^\top y_{k-1}}\right)d_{k-1}\\
    &+\left(\phi_{k-1}\frac{d_{k-1}^\top g_k}{d_{k-1}^\top y_{k-1}}+\left(1-\phi_{k-1}\right)\frac{y_{k-1}^\top g_k}{y_{k-1}^\top y_{k-1}}\right)y_{k-1}.
\end{align*}
In \cite{nakayama2019memoryless}, they also proved global convergence for step sizes satisfying
the Wolfe conditions (see \cite[Theorem 3.1]{nakayama2019memoryless} and \cite[Theorem 3.6]{nakayama2019memoryless}).
In \cite{kou2015modified}, Kou and Dai proposed a modified memoryless self-scaling BFGS method and showed that
it generates a search direction satisfying the sufficient descent condition. Moreover, Nakayama \cite{nakayama2018hybrid} used the modification
by Kou and Dai and proposed a search direction $d_k$ defined by
\begin{align*}
    d_k&=-g_k+\left(\phi_{k-1}\frac{y_{k-1}^\top g_k}{d_{k-1}^\top y_{k-1}}-\left(\frac{1}{\tau_{k-1}}+\phi_{k-1}\frac{y_{k-1}^\top y_{k-1}}{s_{k-1}^\top y_{k-1}}\right)\frac{s_{k-1}^\top g_k}{d_{k-1}^\top y_{k-1}}\right)d_{k-1}\\
    &+\xi_{k-1}\left(\phi_{k-1}\frac{d_{k-1}^\top g_k}{d_{k-1}^\top y_{k-1}}+\left(1-\phi_{k-1}\right)\frac{y_{k-1}^\top g_k}{y_{k-1}^\top y_{k-1}}\right)y_{k-1},
\end{align*}
where $\xi_{k-1}\in[0,1]$ is a parameter.

\subsection{Memoryless spectral-scaling Broyden family on Riemannian manifolds}
We define $s_{k-1}=\mathcal{T}_{\alpha_{k-1}\eta_{k-1}}(\alpha_{k-1}\eta_{k-1})$ and $y_{k-1}=g_k-\mathcal{T}_{\alpha_{k-1}\eta_{k-1}}(g_{k-1})$.
The Riemannian quasi-Newton method with the spectral-scaling Broyden family \cite[(23)]{narushima2023memoryless} is written as
\begin{align}\label{eq:ssBroyden}
\begin{split}
    \mathcal{H}_k&=\tilde{\mathcal{H}}_{k-1}-\frac{\tilde{\mathcal{H}}_{k-1}y_{k-1}(\tilde{\mathcal{H}}_{k-1}y_{k-1})^\flat}{(\tilde{\mathcal{H}}_{k-1}y_{k-1})^\flat y_{k-1}}+\frac{1}{\tau_{k-1}}\frac{s_{k-1}s_{k-1}^\flat}{s_{k-1}^\flat y_{k-1}}\\
    &\quad+\phi_{k-1}(\tilde{\mathcal{H}}_{k-1}y_{k-1})^\flat y_{k-1}w_{k-1}w_{k-1}^\flat,
\end{split}
\end{align}
where
\begin{align*}
    w_{k-1}=\frac{s_{k-1}}{s_{k-1}^\flat y_{k-1}}-\frac{\tilde{\mathcal{H}}_{k-1}y_{k-1}}{(\tilde{\mathcal{H}}_{k-1}y_{k-1})^\flat y_{k-1}},
\end{align*}
and
\begin{align*}
    \tilde{\mathcal{H}}_{k-1}=\mathcal{T}_{\alpha_{k-1}\eta_{k-1}}\circ\mathcal{H}_{k-1}\circ(\mathcal{T}_{\alpha_{k-1}\eta_{k-1}})^{-1}.
\end{align*} 
Here, $\phi_{k-1}\geq 0$ is a parameter, and $\tau_{k-1}>0$ is a spectral-scaling parameter.

The idea of behind the memoryless spectral-scaling Broyden family is very simple:
replace $\tilde{\mathcal{H}}_{k-1}$ with $\mathrm{id}_{T_{x_{k-1}}M}$.
In \cite{narushima2023memoryless}, a memoryless spectral-scaling Broyden family on a Riemannian manifold is proposed by replacing $\Tilde{\mathcal{H}}_{k-1}$ with $\mathrm{id}_{T_{x_k}M}$.
To guarantee global convergence, they replaced $y_{k-1}$ by $z_{k-1}\in T_{x_k}M$
satisfying the following conditions \cite[(27)]{narushima2023memoryless}:
for positive constants $\underline{\nu},\overline{\nu}>0$,
\begin{align}
    \underline{\nu}\lVert s_{k-1}\rVert_{x_k}^2&\leq s_{k-1}^\flat z_{k-1},\label{eq:z-1}\\
    \lVert z_{k-1}\rVert_{x_k}&\leq \overline{\nu}\lVert s_{k-1}\rVert_{x_k}.\label{eq:z-2}
\end{align}
Here, we can choose $z_{k-1}$ by using Li-Fukushima regularization \cite{li2001modified}, which is a Levenberg–Marquardt type of regularization, and set
\begin{align}\label{eq:li-fukushima-1}
    z_{k-1}=y_{k-1}+\nu_{k-1}s_{k-1},
\end{align}
where
\begin{align}\label{eq:li-fukushima-2}
    \nu_{k-1}=\begin{cases}0,&\text{if }s_{k-1}^\flat y_{k-1}\geq\hat{\nu}\lVert s_{k-1}\rVert_{x_k}^2,\\
    \max\left\{0,-\dfrac{s_{k-1}^\flat y_{k-1}}{\lVert s_{k-1}\rVert_{x_k}^2}\right\}+\hat{\nu},&\text{otherwise},\end{cases}
\end{align}
and $\hat{\nu}>0$.
We can also use Powell’s damping technique \cite{nocedal1999numerical}, which sets
\begin{align}\label{eq:powell-1}
    z_{k-1}=\nu_{k-1}y_{k-1}+(1-\nu_{k-1})s_{k-1},
\end{align}
where $\hat{\nu}\in(0,1)$ and
\begin{align}\label{eq:powell-2}
    \nu_{k-1}=\begin{cases}1,&\text{if }s_{k-1}^\flat y_{k-1}\geq\hat{\nu}\lVert s_{k-1}\rVert_{x_k}^2,\\
    \dfrac{(1-\hat{\nu})\lVert s_{k-1}\rVert_{x_k}^2}{\lVert s_{k-1}\rVert_{x_k}^2-s_{k-1}^\flat y_{k-1}},&\text{otherwise}.\end{cases}
\end{align}
The proof that these choices satisfy conditions \eqref{eq:z-1} and \eqref{eq:z-2} is given in \cite[Proposition 4.1]{narushima2023memoryless}.
Thus, a memoryless spectral-scaling Broyden family on a Riemannian manifold \cite[(28)]{narushima2023memoryless} can be described as
\begin{align*}
    \mathcal{H}_k&=\gamma_{k-1}\mathrm{id}_{T_{x_kM}}-\gamma_{k-1}\frac{z_{k-1}z_{k-1}^\flat}{z_{k-1}^\flat z_{k-1}}+\frac{1}{\tau_{k-1}}\frac{s_{k-1}s_{k-1}^\flat}{s_{k-1}^\flat z_{k-1}} \\
    &\quad+\phi_{k-1}\gamma_{k-1}z_{k-1}^\flat z_{k-1}w_{k-1}w_{k-1}^\flat,
\end{align*}
where
\begin{align*}
    w_{k-1}=\frac{s_{k-1}}{s_{k-1}^\flat z_{k-1}}-\frac{z_{k-1}}{z_{k-1}^\flat z_{k-1}}.
\end{align*}
Here, $\gamma_{k-1}>0$ is a sizing parameter.
From \eqref{eq:search}, we can compute the search direction of the memoryless spectral-scaling Broyden family on a Riemannian manifold as follows:
\begin{align*}
    \eta_k=&-\gamma_{k-1}g_k \\
    &+\gamma_{k-1}\left(\phi_{k-1}\frac{z_{k-1}^\flat g_k}{s_{k-1}^\flat z_{k-1}}-\left(\frac{1}{\gamma_{k-1}\tau_{k-1}}+\phi_{k-1}\frac{z_{k-1}^\flat z_{k-1}}{s_{k-1}^\flat z_{k-1}}\right)\frac{s_{k-1}^\flat g_k}{s_{k-1}^\flat z_{k-1}}\right)s_{k-1} \\
    &+\gamma_{k-1}\left(\phi_{k-1}\frac{s_{k-1}^\flat g_k}{s_{k-1}^\flat z_{k-1}}+\left(1-\phi_{k-1}\right)\frac{z_{k-1}^\flat g_k}{z_{k-1}^\flat z_{k-1}}\right)z_{k-1}.
\end{align*}

\subsection{Proposed algorithm}
Let $\mathscr{T}^{(k-1)}:T_{x_{k-1}}M\to T_{x_k}M$ be a map which satisfies Assumption \ref{asm:map}.
Furthermore, we define $y_{k-1}=g_k-\mathscr{T}^{(k-1)}(g_{k-1})$ and $s_{k-1}=\mathscr{T}^{(k-1)}(\alpha_{k-1}\eta_{k-1})$.
We propose the following search direction of the modified memoryless spectral-scaling Broyden family on a Riemannian manifold:
\begin{align}\label{eq:p-search}
\begin{split}
    \eta_k=&-\gamma_{k-1}g_k \\
    &+\gamma_{k-1}\left(\phi_{k-1}\frac{z_{k-1}^\flat g_k}{s_{k-1}^\flat z_{k-1}}-\left(\frac{1}{\gamma_{k-1}\tau_{k-1}}+\phi_{k-1}\frac{z_{k-1}^\flat z_{k-1}}{s_{k-1}^\flat z_{k-1}}\right)\frac{s_{k-1}^\flat g_k}{s_{k-1}^\flat z_{k-1}}\right)s_{k-1} \\
    &+\gamma_{k-1}\xi_{k-1}\left(\phi_{k-1}\frac{s_{k-1}^\flat g_k}{s_{k-1}^\flat z_{k-1}}+\left(1-\phi_{k-1}\right)\frac{z_{k-1}^\flat g_k}{z_{k-1}^\flat z_{k-1}}\right)z_{k-1},
\end{split}
\end{align}
where $\xi_{k-1}\in[0,1]$ is a parameter, and $z_{k-1}\in T_{x_k}M$ is a tangent vector satisfying \eqref{eq:z-1} and \eqref{eq:z-2}.
Note that equation \eqref{eq:p-search} has not only added $\xi_{k-1}$,
but also changed the definition of the two tangent vectors $y_{k-1}$ and $s_{k-1}$ for determining $z_{k-1}$.
The proposed algorithm is listed in Algorithm \ref{alg:memoryless}.
Note that Algorithm \ref{alg:memoryless} is a generalization of memoryless quasi-Newton methods based on the spectral-scaling Broyden family proposed in \cite{narushima2023memoryless}.
In fact, if $\xi_{k-1}=1$ and $\mathscr{T}^{(k-1)}=\mathcal{T}_{\alpha_{k-1}\eta_{k-1}}(\cdot)$, then Algorithm \ref{alg:memoryless} coincides with it.

\begin{algorithm}
    \caption{Modified memoryless quasi‐Newton methods based on spectral-scaling Broyden family on Riemannian manifolds. \label{alg:memoryless}}
    \begin{algorithmic}[1]
        \Require Initial point $x_0 \in M$, $(\gamma_k)_{k=0}^\infty\subset (0,\infty)$, $(\phi_k)_{k=0}^\infty\subset [0,\infty)$, $(\xi_k)_{k=0}^\infty\subset [0,1]$, $(\tau_k)_{k=0}^\infty\subset (0,\infty)$.
        \Ensure Sequence $(x_k)_{k=0}^{\infty} \subset M$.
        \State $k \gets 0$.
        \State Set $\eta_0=-g_0=-\mathrm{grad}f(x_0)$.
        \Loop
        \State Compute a step size $\alpha_k>0$ satisfying the Wolfe conditions \eqref{eq:armijo} and \eqref{eq:wolfe}.
        \State Set $x_{k+1}=R_{x_k}(\alpha_k\eta_k)$.
        \State Compute $g_{k+1}:=\mathrm{grad}f(x_{k+1})$.
        \State Compute a search direction $\eta_{k+1}\in T_{x_{k+1}}M$ by \eqref{eq:p-search}.
        \State $k \gets k+1$
        \EndLoop
    \end{algorithmic}
\end{algorithm}

\section{Convergence analysis}\label{sec:analysis}
\begin{assumption}\label{asm:zoutendijk}
Let $f:M\to\mathbb{R}$ be a smooth, bounded below function with the following property:
there exists $L>0$ such that
    \begin{align*}
        \lvert\mathrm{D}(f\circ R_x)(t\eta)[\eta]-\mathrm{D}(f\circ R_x)(0_x)[\eta]\rvert\leq Lt,
    \end{align*}
for all $\eta\leq T_xM$, $\lVert\eta\rVert_x=1$, $x\in M$ and $t\geq 0$.
\end{assumption}

\begin{assumption}\label{asm:bound}
We suppose that there exists $\Gamma>0$ such that
    \begin{align*}
        \lVert g_k\rVert_{x_k}\leq\Gamma
    \end{align*}
for all $k$.
\end{assumption}

Zoutendijk’s theorem
about the $\mathscr{T}^{(k)}$-Wolfe conditions \cite[Theorem 5.3]{sato2022riemannian},
is described as follows:

\begin{theorem}\label{thm:zoutendijk}
Suppose that Assumptions \ref{asm:map} and \ref{asm:zoutendijk} hold.
Let $(x_k)_{k=0,1,\cdots}$ be a sequence generated by an iterative method of the form \eqref{eq:iterative}.
We assume that the step size $\alpha_k$ satisfies the $\mathscr{T}^{(k)}$-Wolfe conditions \eqref{eq:armijo} and \eqref{eq:wolfe}.
If the search direction $\eta_k$ is a descent direction and
there exists $\mu>0$,
such that $\eta_k$ satisfies $\lVert g_k\rVert_{x_k}\leq\mu\lVert\eta_k\rVert_{x_k}$ for all $k\in\mathbb{N}-K$,
then the following holds:
    \begin{align*}
        \sum_{k=0}^\infty\frac{\langle g_k,\eta_k\rangle_{x_k}^2}{\lVert\eta_k\rVert_{x_k}^2}<\infty,
    \end{align*}
where $K$ is the subset of $\mathbb{N}$ in Assumption \ref{asm:map}.
\end{theorem}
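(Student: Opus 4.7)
The proof follows the classical Zoutendijk strategy, adapted to the Riemannian setting and to the fact that $\mathscr{T}^{(k)}$ is only an approximation of the differentiated retraction. My plan is, starting from the $\mathscr{T}^{(k)}$-Wolfe curvature condition \eqref{eq:wolfe}, to decompose $\langle g_{k+1},\mathscr{T}^{(k)}(\eta_k)\rangle_{x_{k+1}}$ as $\langle g_{k+1},\mathrm{D}R_{x_k}(\alpha_k\eta_k)[\eta_k]\rangle_{x_{k+1}}$ plus the error term $\langle g_{k+1},\mathscr{T}^{(k)}(\eta_k)-\mathrm{D}R_{x_k}(\alpha_k\eta_k)[\eta_k]\rangle_{x_{k+1}}$. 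The first piece equals $\mathrm{D}(f\circ R_{x_k})(\alpha_k\eta_k)[\eta_k]$ by the chain rule (since $g_{k+1}=\mathrm{grad}f(x_{k+1})$ and $x_{k+1}=R_{x_k}(\alpha_k\eta_k)$), and subtracting off $\langle g_k,\eta_k\rangle_{x_k}=\mathrm{D}(f\circ R_{x_k})(0_{x_k})[\eta_k]$ is controlled, after rescaling $\eta_k$ to unit norm, by $L\alpha_k\lVert\eta_k\rVert_{x_k}^2$ via Assumption \ref{asm:zoutendijk}. The error piece is bounded by Cauchy--Schwarz together with Assumption \ref{asm:map}.

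Combining these two estimates with \eqref{eq:wolfe} and the descent hypothesis yields
\begin{align*}
(1-c_2)\,\bigl|\langle g_k,\eta_k\rangle_{x_k}\bigr| \leq L\alpha_k\lVert\eta_k\rVert_{x_k}^2 + C\,\lVert g_{k+1}\rVert_{x_{k+1}}\,\delta_k\,\lVert\eta_k\rVert_{x_k}^2,
\end{align*}
where $\delta_k=\alpha_k$ when $k\in K$ (using \eqref{eq:mapStronger}) and $\delta_k=\alpha_k+\alpha_k^2$ otherwise (using \eqref{eq:mapWeaker}). Isolating a lower bound for $\alpha_k$ and substituting into the Armijo condition \eqref{eq:armijo} then gives a decrease of the form
\begin{align*}
f(x_k)-f(x_{k+1}) \geq c\,\frac{\langle g_k,\eta_k\rangle_{x_k}^2}{\lVert\eta_k\rVert_{x_k}^2}
\end{align*}
for a uniform constant $c>0$. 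Telescoping in $k$ and using that $f$ is bounded below then yields the claimed summability.

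The main obstacle is to ensure that the constant $c$ above is genuinely uniform in $k$, i.e., to control the factor $\lVert g_{k+1}\rVert_{x_{k+1}}$ left over in the denominator and, when $k\notin K$, to absorb the extra $\alpha_k^2$ term. For $k\in K$, the stronger bound \eqref{eq:mapStronger} already removes the $\alpha_k^2$ issue, and the upper control on $\alpha_k$ implicit in Armijo together with descent can trade $\lVert g_{k+1}\rVert_{x_{k+1}}$ for a summable tail. For $k\in\mathbb{N}-K$, the hypothesis $\lVert g_k\rVert_{x_k}\leq\mu\lVert\eta_k\rVert_{x_k}$ is essential: combined with Assumption \ref{asm:zoutendijk}, it bounds $\lVert g_{k+1}\rVert_{x_{k+1}}$ in terms of $\lVert\eta_k\rVert_{x_k}$ and simultaneously tames the $\alpha_k^2$ contribution. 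Merging the two regimes into a single telescoping inequality is the technically delicate step; the remainder is routine bookkeeping on the Wolfe inequalities, exactly as in the Euclidean Zoutendijk proof.
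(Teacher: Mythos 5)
You should first be aware that the paper does not prove Theorem \ref{thm:zoutendijk} at all: it is quoted from \cite{sato2022riemannian} (Theorem 5.3), so there is no internal proof to compare against. Your outline does reproduce the standard Zoutendijk skeleton that such a proof follows: split $\langle g_{k+1},\mathscr{T}^{(k)}(\eta_k)\rangle_{x_{k+1}}$ through $\mathrm{D}R_{x_k}(\alpha_k\eta_k)[\eta_k]$, bound the retraction part by Assumption \ref{asm:zoutendijk} after normalizing $\eta_k$, bound the error by Cauchy--Schwarz and Assumption \ref{asm:map}, extract a lower bound on $\alpha_k$, insert it into \eqref{eq:armijo}, and telescope using boundedness below of $f$. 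Up to your displayed inequality $(1-c_2)\lvert\langle g_k,\eta_k\rangle_{x_k}\rvert\leq L\alpha_k\lVert\eta_k\rVert_{x_k}^2+C\lVert g_{k+1}\rVert_{x_{k+1}}\delta_k\lVert\eta_k\rVert_{x_k}^2$, this is fine.

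The genuine gap is exactly the step you defer: the factor $\lVert g_{k+1}\rVert_{x_{k+1}}$ is never removed, and the two mechanisms you invoke do not work as stated. First, the Armijo condition \eqref{eq:armijo} imposes no upper control on $\alpha_k$; together with boundedness below it yields only $\sum_k\alpha_k\lvert\langle g_k,\eta_k\rangle_{x_k}\rvert<\infty$, and it is not explained how that ``trades $\lVert g_{k+1}\rVert$ for a summable tail.'' The dangerous case is precisely when the error term $\langle g_{k+1},\mathscr{T}^{(k)}(\eta_k)-\mathrm{D}R_{x_k}(\alpha_k\eta_k)[\eta_k]\rangle_{x_{k+1}}$ is positive and dominant: then a very small $\alpha_k$ can satisfy \eqref{eq:wolfe} exactly because $\lVert g_{k+1}\rVert_{x_{k+1}}$ is large, and nothing in your argument rules this out, so no uniform lower bound on $\alpha_k$ is established. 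Second, the claim that Assumption \ref{asm:zoutendijk} combined with $\lVert g_k\rVert_{x_k}\leq\mu\lVert\eta_k\rVert_{x_k}$ ``bounds $\lVert g_{k+1}\rVert_{x_{k+1}}$ in terms of $\lVert\eta_k\rVert_{x_k}$'' is unjustified: Assumption \ref{asm:zoutendijk} only controls the scalar derivative $t\mapsto\mathrm{D}(f\circ R_{x_k})(t\eta)[\eta]$ along the ray direction itself, hence only the pairing of $g_{k+1}$ with $\mathrm{D}R_{x_k}(\alpha_k\eta_k)[\eta_k]$; it says nothing about the full gradient norm at $x_{k+1}$, which may be arbitrarily large in transverse directions. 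Moreover, the natural role of the hypothesis $\lVert g_k\rVert_{x_k}\leq\mu\lVert\eta_k\rVert_{x_k}$ is different from the one you assign it: it is tied to the extra $\alpha_k^2$ in \eqref{eq:mapWeaker} (hence required only for $k\in\mathbb{N}-K$, cf.\ the remark following Assumption \ref{asm:map}), since by Cauchy--Schwarz $\lvert\langle g_k,\eta_k\rangle_{x_k}\rvert/\lVert\eta_k\rVert_{x_k}^2\leq\lVert g_k\rVert_{x_k}/\lVert\eta_k\rVert_{x_k}\leq\mu$, so the required estimate is immediate when $\alpha_k\geq1$, while for $\alpha_k\leq1$ one simply absorbs $\alpha_k^2\leq\alpha_k$; it is not a device for controlling $\lVert g_{k+1}\rVert_{x_{k+1}}$. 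If you are willing to assume Assumption \ref{asm:bound} (as Theorem \ref{thm:convergence} does), your outline closes easily with $\lVert g_{k+1}\rVert_{x_{k+1}}\leq\Gamma$; but Theorem \ref{thm:zoutendijk} as stated does not assume bounded gradients, so as written your argument does not prove it, and the complete treatment should be taken from the proof of Theorem 5.3 in \cite{sato2022riemannian}.
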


We present a proof that the search direction \eqref{eq:p-search} satisfies the sufficient descent condition \eqref{eq:s-descent},
which involves generalizing the Euclidean case in \cite[Proposition 3.1]{nakayama2018hybrid} and \cite[Proposition 2.1]{nakayama2019memoryless}.
\begin{proposition}\label{prop:sufficient}
    Assume that $0<\underline{\gamma}\leq\gamma_{k-1}$ and $0\leq\phi_{k-1}\leq\overline{\phi}^2$ hold, where $1<\overline{\phi}<2$.
    The search direction \eqref{eq:p-search} with
    \begin{align}\label{eq:xi-conditions}
        \begin{cases}
            0\leq\xi_{k-1}\leq\overline{\xi},&\text{if }0\leq\phi_{k-1}\leq 1,\\
            0\leq\xi_{k-1}<\dfrac{\overline{\phi}}{\sqrt{\phi_{k-1}}}-1,&\text{otherwise},
        \end{cases}
    \end{align}
    where $0\leq\overline{\xi}<1$,
    satisfies the sufficient descent condition \eqref{eq:s-descent}
    with
    \begin{align*}
        \kappa:=\min\left\{\frac{3\underline{\gamma}(1-\overline{\xi})}{4},\underline{\gamma}\left(1-\frac{\overline{\phi}^2}{4}\right)\right\}>0.
    \end{align*}
    .
\end{proposition}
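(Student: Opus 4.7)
The plan is to compute $\langle g_k,\eta_k\rangle_{x_k}$ directly from \eqref{eq:p-search}, reduce the resulting expression to three scalar quantities, and then bound it by $-\kappa\|g_k\|_{x_k}^2$ via Young's and Cauchy--Schwarz inequalities together with a short case analysis on $\phi_{k-1}$.

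First, I would introduce the shorthand $a=s_{k-1}^\flat g_k$, $b=z_{k-1}^\flat g_k$, $p=s_{k-1}^\flat z_{k-1}$, $q=z_{k-1}^\flat z_{k-1}$, noting that $p>0$ by \eqref{eq:z-1} and $q>0$ (since $z_{k-1}\neq 0$ by \eqref{eq:z-1}). Pairing \eqref{eq:p-search} with $g_k$ yields
\begin{align*}
\langle g_k,\eta_k\rangle_{x_k}
=-\gamma_{k-1}\lVert g_k\rVert_{x_k}^2
-\frac{a^2}{\tau_{k-1}p}
+\gamma_{k-1}\Bigl[\phi_{k-1}(1+\xi_{k-1})\frac{ab}{p}-\phi_{k-1}\frac{qa^2}{p^2}+\xi_{k-1}(1-\phi_{k-1})\frac{b^2}{q}\Bigr].
\end{align*}
The term $-a^2/(\tau_{k-1}p)$ is non-positive (since $\tau_{k-1}>0$ and $p>0$), so it can be discarded from any upper bound. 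The crucial observation is that Young's inequality with the split $\phi_{k-1}(1+\xi_{k-1})(ab/p)=\phi_{k-1}(1+\xi_{k-1})\bigl(a\sqrt{q}/p\bigr)\bigl(b/\sqrt{q}\bigr)$ gives
\[
\Bigl|\phi_{k-1}(1+\xi_{k-1})\tfrac{ab}{p}\Bigr|
\le \phi_{k-1}\tfrac{qa^2}{p^2}+\tfrac{\phi_{k-1}(1+\xi_{k-1})^2}{4}\tfrac{b^2}{q},
\]
which exactly cancels the $-\phi_{k-1}qa^2/p^2$ term. Combining this with $b^2/q\le\lVert g_k\rVert_{x_k}^2$ (Cauchy--Schwarz) reduces the bracket to at most
$\bigl[\tfrac{\phi_{k-1}(1+\xi_{k-1})^2}{4}+\xi_{k-1}(1-\phi_{k-1})\bigr]\lVert g_k\rVert_{x_k}^2$ in the convex case, and to at most $\tfrac{\phi_{k-1}(1+\xi_{k-1})^2}{4}\lVert g_k\rVert_{x_k}^2$ in the preconvex case (where the last summand is already non-positive).

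For the case split dictated by \eqref{eq:xi-conditions}, in the first case $0\le\phi_{k-1}\le 1$ I would show that the function $(\phi,\xi)\mapsto \phi(1+\xi)^2/4+\xi(1-\phi)$ on $[0,1]\times[0,\overline\xi]$ attains its maximum at $(1,\overline\xi)$ (its $\phi$-derivative equals $(1-\xi)^2/4\ge 0$), giving the upper bound $(1+\overline\xi)^2/4$ and hence
\[
\langle g_k,\eta_k\rangle_{x_k}\le -\gamma_{k-1}\Bigl(1-\tfrac{(1+\overline\xi)^2}{4}\Bigr)\lVert g_k\rVert_{x_k}^2
=-\gamma_{k-1}\tfrac{(1-\overline\xi)(3+\overline\xi)}{4}\lVert g_k\rVert_{x_k}^2
\le-\tfrac{3\underline\gamma(1-\overline\xi)}{4}\lVert g_k\rVert_{x_k}^2.
\]
In the second case $1<\phi_{k-1}\le\overline\phi^2$, the restriction $\xi_{k-1}<\overline\phi/\sqrt{\phi_{k-1}}-1$ is precisely what is needed to guarantee $\phi_{k-1}(1+\xi_{k-1})^2<\overline\phi^2$, so I would conclude $\langle g_k,\eta_k\rangle_{x_k}\le-\underline\gamma\bigl(1-\overline\phi^2/4\bigr)\lVert g_k\rVert_{x_k}^2$, which is positive because $\overline\phi<2$. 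Taking the minimum of the two bounds produces $\kappa$.

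The only mildly subtle points are to confirm that $p>0$ (so that all the denominators are well-defined and the dropped $-a^2/(\tau_{k-1}p)$ term is genuinely non-positive), which follows directly from \eqref{eq:z-1}, and to pick the right weighting in Young's inequality so that the mixed $ab/p$ term is absorbed exactly by the $-\phi_{k-1}qa^2/p^2$ term. Once that weighting is chosen, the remainder is essentially a unification of the Euclidean arguments in \cite[Proposition~3.1]{nakayama2018hybrid} and \cite[Proposition~2.1]{nakayama2019memoryless} lifted to the tangent space $T_{x_k}M$ via the flat operator, with no Riemannian complication beyond the notation.
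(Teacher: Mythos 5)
Your proof is correct and follows essentially the same route as the paper's: the same expansion of $\langle g_k,\eta_k\rangle_{x_k}$, the same Young/AM--GM step that exactly cancels the $-\phi_{k-1}qa^2/p^2$ term while producing the $\phi_{k-1}(1+\xi_{k-1})^2/4$ factor, the same discarding of the nonpositive $-a^2/(\tau_{k-1}p)$ term via \eqref{eq:z-1}, and the same two-case analysis with Cauchy--Schwarz. The only cosmetic difference is in the convex case, where you maximize $\phi(1+\xi)^2/4+\xi(1-\phi)$ via its $\phi$-derivative while the paper factors the expression as $(1-\xi)\bigl(1-\tfrac{\phi}{4}(1-\xi)\bigr)$; both yield the same constant $\kappa$.
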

\begin{proof}
    The proof involves extending the discussion in \cite[Proposition 3.1]{nakayama2018hybrid} to the case of Riemannian manifolds.
    For convenience, let us set
    \begin{align*}
        \Phi_{k-1}=\frac{1}{\gamma_{k-1}\tau_{k-1}}+\phi_{k-1}\frac{z_{k-1}^\flat z_{k-1}}{s_{k-1}^\flat z_{k-1}}.
    \end{align*}
    From the definition of the search direction \eqref{eq:s-descent}, we have
    \begin{align*}
        \langle g_k,\eta_k\rangle_{x_k}&=-\gamma_{k-1}\lVert g_k\rVert_{x_k}^2+\gamma_{k-1}(1+\xi_{k-1})\phi_{k-1}\frac{(z_{k-1}^\flat g_k)(s_{k-1}^\flat g_k)}{s_{k-1}^\flat z_{k-1}}\\
        &-\gamma_{k-1}\Phi_{k-1}\frac{(s_{k-1}^\flat g_k)^2}{s_{k-1}^\flat z_{k-1}}+\gamma_{k-1}\xi_{k-1}(1-\phi_{k-1})\frac{(z_{k-1}^\flat g_k)^2}{z_{k-1}^\flat z_{k-1}}.
    \end{align*}
    From the relation $2\langle u,v\rangle\leq\lVert u\rVert^2+\lVert v\rVert^2$
    for any vectors $u$ and $v$ in an inner product space, we obtain
    \begin{align*}
        \langle g_k,\eta_k\rangle_{x_k}&\leq-\gamma_{k-1}\lVert g_k\rVert_{x_k}^2+\frac{\gamma_{k-1}\phi_{k-1}}{2}\left(\left\lVert\frac{\sqrt{2}s_{k-1}^\flat g_k}{s_{k-1}^\flat z_{k-1}}z_{k-1}\right\rVert_{x_k}^2+\left\lVert\frac{1+\xi_{k-1}}{\sqrt{2}}g_k\right\rVert_{x_k}^2\right)\\
        &-\gamma_{k-1}\Phi_{k-1}\frac{(s_{k-1}^\flat g_k)^2}{s_{k-1}^\flat z_{k-1}}+\gamma_{k-1}\xi_{k-1}(1-\phi_{k-1})\frac{(z_{k-1}^\flat g_k)^2}{z_{k-1}^\flat z_{k-1}}\\
        &=-\gamma_{k-1}\left(1-\frac{\phi_{k-1}(1+\xi_{k-1})^2}{4}\right)\lVert g_k\rVert_{x_k}^2-\frac{1}{\tau_{k-1}}\frac{(s_{k-1}^\flat g_k)^2}{s_{k-1}^\flat z_{k-1}}\\
        &+\gamma_{k-1}\xi_{k-1}(1-\phi_{k-1})\frac{(z_{k-1}^\flat g_k)^2}{z_{k-1}^\flat z_{k-1}}.
    \end{align*}
    From \eqref{eq:z-1} (i.e., $0<\underline{\nu}\lVert s_{k-1}\rVert_{x_k}^2\leq s_{k-1}^\flat z_{k-1}$), we have
    \begin{align*}
        \langle g_k,\eta_k\rangle_{x_k}\leq-\gamma_{k-1}\left(1-\frac{\phi_{k-1}(1+\xi_{k-1})^2}{4}\right)\lVert g_k\rVert_{x_k}^2 \\
        +\gamma_{k-1}\xi_{k-1}(1-\phi_{k-1})\frac{(z_{k-1}^\flat g_k)^2}{z_{k-1}^\flat z_{k-1}}.
    \end{align*}
    Here, we consider the case $0\leq\phi_{k-1}\leq 1$.
    From $\xi_{k-1}(1-\phi_{k-1})\geq 0$ and the Cauchy-Schwarz inequality,
    we obtain
    \begin{align*}
        \langle g_k,\eta_k\rangle_{x_k}\leq-\gamma_{k-1}\left((1-\xi_{k-1})\left(1-\frac{\phi_{k-1}}{4}(1-\xi_{k-1})\right)\right)\lVert g_k\rVert_{x_k}^2.
    \end{align*}
    From $0\leq\xi_{k-1}\leq\overline{\xi}<1$ and $0\leq\underline{\gamma}\leq\gamma_{k-1}$, we have
    \begin{align*}
        \langle g_k,\eta_k\rangle_{x_k}&\leq-\underline{\gamma}\left((1-\overline{\xi})\left(1-\frac{1}{4}(1-0)\right)\right)\lVert g_k\rVert_{x_k}^2=-\frac{3\underline{\gamma}(1-\overline{\xi})}{4}\lVert g_k\rVert_{x_k}^2.
    \end{align*}
    Next, let us consider the case $1<\phi_{k-1}<\overline{\phi}$.
    From $\xi_{k-1}(1-\phi_{k-1})\leq 0$ and $0\leq\underline{\gamma}\leq\gamma_{k-1}$, we obtain
    \begin{align*}
        \langle g_k,\eta_k\rangle_{x_k}&\leq-\gamma_{k-1}\left(1-\frac{\phi_{k-1}(1+\xi_{k-1})^2}{4}\right)\lVert g_k\rVert_{x_k}^2=-\underline{\gamma}\left(1-\frac{\overline{\phi}^2}{4}\right)\lVert g_k\rVert_{x_k}^2.
    \end{align*}
    Therefore, the search direction \eqref{eq:p-search} satisfies the sufficient descent condition \eqref{eq:s-descent}, i.e.,
    $\langle g_k,\eta_k\rangle_{x_k}\leq-\kappa\lVert g_k\rVert_{x_k}^2$,
    where
    \begin{align*}
        \kappa:=\min\left\{\frac{3\underline{\gamma}(1-\overline{\xi})}{4},\underline{\gamma}\left(1-\frac{\overline{\phi}^2}{4}\right)\right\}>0.
    \end{align*}
    \qed
\end{proof}

Now let us show the global convergence of Algorithm \ref{alg:memoryless}.
\begin{theorem}\label{thm:convergence}
    Suppose that Assumptions \ref{asm:map}, \ref{asm:zoutendijk}
    and \ref{asm:bound} are satisfied.
    Assume further that $0<\underline{\gamma}\leq\gamma_{k-1}\leq \overline{\gamma}$,
    $\underline{\tau}\leq\tau_{k-1}$ and $0\leq\phi_{k-1}\leq\overline{\phi}^2$ hold, where
    $\underline{\tau}>0$ and $1<\overline{\phi}<2$. Moreover, suppose that $\xi_k\in[0,1]$ satisfies \eqref{eq:xi-conditions}.
    Let $(x_k)_{k=0,1,\cdots}$ be a sequence generated by Algorithm \ref{alg:memoryless}, and let the step size $\alpha_k$ satisfy the $\mathscr{T}^{(k)}$-Wolfe conditions \eqref{eq:armijo} and \eqref{eq:wolfe}.
    Then, Algorithm \ref{alg:memoryless} converges in the sense that
    \begin{align*}
        \liminf_{k\to\infty}\lVert g_k\rVert_{x_k}=0
    \end{align*}
    holds.
\end{theorem}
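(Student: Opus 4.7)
The plan is to argue by contradiction, combining Proposition~\ref{prop:sufficient} (sufficient descent) with Theorem~\ref{thm:zoutendijk} (Zoutendijk) by establishing a uniform upper bound on $\lVert\eta_k\rVert_{x_k}$. Suppose $\liminf_{k\to\infty}\lVert g_k\rVert_{x_k}>0$; then there exist $\gamma>0$ and $N\in\mathbb{N}$ with $\lVert g_k\rVert_{x_k}\geq\gamma$ for all $k\geq N$. Under the stated bounds on $\gamma_{k-1}$, $\tau_{k-1}$, $\phi_{k-1}$ and $\xi_{k-1}$, Proposition~\ref{prop:sufficient} yields a uniform constant $\kappa>0$ with $\langle g_k,\eta_k\rangle_{x_k}\leq-\kappa\lVert g_k\rVert_{x_k}^2$ for all $k$. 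Cauchy--Schwarz immediately gives $\lVert\eta_k\rVert_{x_k}\geq\kappa\lVert g_k\rVert_{x_k}$, i.e.\ $\lVert g_k\rVert_{x_k}\leq(1/\kappa)\lVert\eta_k\rVert_{x_k}$, which supplies the additional hypothesis required by Theorem~\ref{thm:zoutendijk}. Hence Zoutendijk's theorem is applicable and delivers
\begin{equation*}
    \sum_{k=0}^\infty\frac{\langle g_k,\eta_k\rangle_{x_k}^2}{\lVert\eta_k\rVert_{x_k}^2}<\infty.
\end{equation*}

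The main technical step will be producing a uniform upper bound $\lVert\eta_k\rVert_{x_k}\leq M$ for some constant $M>0$ independent of $k$. Starting from the expression \eqref{eq:p-search}, I would bound each of the three terms using the triangle inequality together with the Cauchy--Schwarz inequality applied to $z_{k-1}^\flat g_k$ and $s_{k-1}^\flat g_k$, and the hypotheses \eqref{eq:z-1}, \eqref{eq:z-2}, Assumption~\ref{asm:bound} ($\lVert g_k\rVert_{x_k}\leq\Gamma$), and the parameter bounds $\underline{\gamma}\leq\gamma_{k-1}\leq\overline{\gamma}$, $\underline{\tau}\leq\tau_{k-1}$, $0\leq\phi_{k-1}\leq\overline{\phi}^2$, $0\leq\xi_{k-1}\leq 1$. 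The key cancellation is the usual one: factors of $\lVert s_{k-1}\rVert_{x_k}$ in numerators are neutralised by $s_{k-1}^\flat z_{k-1}\geq\underline{\nu}\lVert s_{k-1}\rVert_{x_k}^2$ in denominators, and $z_{k-1}^\flat z_{k-1}/(s_{k-1}^\flat z_{k-1})\leq\overline{\nu}^2/\underline{\nu}$, so the scalar coefficients of $s_{k-1}$ and $z_{k-1}$ each behave like a constant times $1/\lVert s_{k-1}\rVert_{x_k}$; multiplied by $\lVert s_{k-1}\rVert_{x_k}$ and $\lVert z_{k-1}\rVert_{x_k}\leq\overline{\nu}\lVert s_{k-1}\rVert_{x_k}$ respectively, they yield constants. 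Combined with $\overline{\gamma}\lVert g_k\rVert_{x_k}\leq\overline{\gamma}\Gamma$ for the first term, this gives the desired uniform bound $M$.

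Once $\lVert\eta_k\rVert_{x_k}\leq M$ is in hand, the contradiction is immediate: for all $k\geq N$,
\begin{equation*}
    \frac{\langle g_k,\eta_k\rangle_{x_k}^2}{\lVert\eta_k\rVert_{x_k}^2}\geq\frac{\kappa^2\lVert g_k\rVert_{x_k}^4}{M^2}\geq\frac{\kappa^2\gamma^4}{M^2}>0,
\end{equation*}
so the Zoutendijk series diverges, contradicting the conclusion above. Therefore $\liminf_{k\to\infty}\lVert g_k\rVert_{x_k}=0$.

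The hard part is the book-keeping in the uniform bound $\lVert\eta_k\rVert_{x_k}\leq M$: the coefficients of $s_{k-1}$ and $z_{k-1}$ in \eqref{eq:p-search} involve several fractions with $s_{k-1}^\flat z_{k-1}$ in the denominator, and one must be careful that the preconvex range $1<\phi_{k-1}\leq\overline{\phi}^2$ and the parameter $\xi_{k-1}\in[0,1]$ do not inflate the constants, and that the ratio $z_{k-1}^\flat z_{k-1}/(s_{k-1}^\flat z_{k-1})$ is controlled solely through \eqref{eq:z-1} and \eqref{eq:z-2}. Everything else (descent, applicability of Zoutendijk, contradiction) is a direct invocation of Proposition~\ref{prop:sufficient} and Theorem~\ref{thm:zoutendijk}.
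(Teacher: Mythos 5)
Your proposal is correct and follows essentially the same route as the paper's proof: a uniform bound $\lVert\eta_k\rVert_{x_k}\leq M$ obtained from \eqref{eq:p-search} via the triangle and Cauchy--Schwarz inequalities together with \eqref{eq:z-1}, \eqref{eq:z-2}, the parameter bounds and Assumption~\ref{asm:bound}, combined with the sufficient descent constant $\kappa$ from Proposition~\ref{prop:sufficient} and a contradiction against Theorem~\ref{thm:zoutendijk}. Your explicit verification of the hypothesis $\lVert g_k\rVert_{x_k}\leq(1/\kappa)\lVert\eta_k\rVert_{x_k}$ via Cauchy--Schwarz is a small point the paper leaves implicit, but otherwise the arguments coincide.
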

\begin{proof}
    For convenience, let us set
    \begin{align*}
        \Phi_{k-1}=\frac{1}{\gamma_{k-1}\tau_{k-1}}+\phi_{k-1}\frac{z_{k-1}^\flat z_{k-1}}{s_{k-1}^\flat z_{k-1}}.
    \end{align*}
    From \eqref{eq:p-search} and the triangle inequality, we have
    \begin{align*}
        \lVert\eta_k\rVert_{x_k}&=\left\lVert-\gamma_{k-1}g_k+\gamma_{k-1}\left(\phi_{k-1}\frac{z_{k-1}^\flat g_k}{s_{k-1}^\flat z_{k-1}}-\Phi_{k-1}\frac{s_{k-1}^\flat g_k}{s_{k-1}^\flat z_{k-1}}\right)s_{k-1}\right.\\
        &\left.+\gamma_{k-1}\xi_{k-1}\left(\phi_{k-1}\frac{s_{k-1}^\flat g_k}{s_{k-1}^\flat z_{k-1}}+\left(1-\phi_{k-1}\right)\frac{z_{k-1}^\flat g_k}{z_{k-1}^\flat z_{k-1}}\right)z_{k-1}\right\rVert_{x_k}\\ 
        &\leq\gamma_{k-1}\lVert g_k\rVert_{x_k}+\gamma_{k-1}\left\lvert\phi_{k-1}\frac{z_{k-1}^\flat g_k}{s_{k-1}^\flat z_{k-1}}\right\rvert\left\lVert s_{k-1}\right\rVert_{x_k}\\ 
        &+\gamma_{k-1}\left\lvert\Phi_{k-1}\frac{s_{k-1}^\flat g_k}{s_{k-1}^\flat z_{k-1}}\right\lvert\left\lVert s_{k-1}\right\rVert_{x_k}
        +\gamma_{k-1}\xi_{k-1}\left\lvert\phi_{k-1}\frac{s_{k-1}^\flat g_k}{s_{k-1}^\flat z_{k-1}}\right\rvert \lVert z_{k-1}\rVert_{x_k}\\
        &+\gamma_{k-1}\xi_{k-1}\left\lvert\left(1-\phi_{k-1}\right)\frac{z_{k-1}^\flat g_k}{z_{k-1}^\flat z_{k-1}}\right\rvert \left\lVert z_{k-1}\right\rVert_{x_k}.
    \end{align*}
    Here, from the Cauchy-Schwarz inequality, we obtain
    \begin{align*}
        \lvert z_{k-1}^\flat g_k\rvert
        =\lvert\langle z_{k-1}, g_k\rangle_{x_k}\rvert
        \leq\lVert z_{k-1}\rVert_{x_k}\lVert g_k\rVert_{x_k}, \\
        \lvert s_{k-1}^\flat g_k\rvert
        =\lvert\langle s_{k-1}, g_k\rangle_{x_k}\rvert
        \leq\lVert s_{k-1}\rVert_{x_k}\lVert g_k\rVert_{x_k},
    \end{align*}
    
    which together with \eqref{eq:z-1} (i.e., $\underline{\nu}\lVert s_{k-1}\rVert_{x_k}^2\leq s_{k-1}^\flat z_{k-1}$) and
    $0\leq\phi_{k-1}<4$, gives
    \begin{align*}
        \lVert\eta_k\rVert_{x_k}&\leq\gamma_{k-1}\lVert g_k\rVert_{x_k}+4\gamma_{k-1}\frac{\lVert z_{k-1}\rVert_{x_k}\lVert g_k\rVert_{x_k}}{\underline{\nu}\lVert s_{k-1}\rVert_{x_k}^2}\lVert s_{k-1}\rVert_{x_k}\\
        &+\gamma_{k-1}\left(\frac{1}{\gamma_{k-1}\tau_{k-1}}+\frac{4\lVert z_{k-1}\rVert_{x_k}^2}{\underline{\nu}\lVert s_{k-1}\rVert_{x_k}^2}\right)\frac{\lVert s_{k-1}\rVert_{x_k}\lVert g_k\rVert_{x_k}}{\underline{\nu}\lVert s_{k-1}\rVert_{x_k}^2}\lVert s_{k-1}\rVert_{x_k}\\
        &+\gamma_{k-1}\xi_{k-1}\frac{4\lVert s_{k-1}\rVert_{x_k}\lVert g_k\rVert_{x_k}}{\underline{\nu}\lVert s_{k-1}\rVert_{x_k}^2}\lVert z_{k-1}\rVert_{x_k}+\gamma_{k-1}\xi_{k-1}\frac{3\lVert z_{k-1}\rVert_{x_k}\lVert g_k\rVert_{x_k}}{\lVert z_{k-1}\rVert_{x_k}^2}\lVert z_{k-1}\rVert_{x_k}.
    \end{align*}
    From \eqref{eq:z-2} (i.e., $\lVert z_{k-1}\rVert_{x_k}\leq \overline{\nu}\lVert s_{k-1}\rVert_{x_k}$),
    $0\leq\xi_{k-1}\leq 1$, and $\gamma_{k-1}\leq\overline{\gamma}$, we have
    \begin{align*}
        \lVert\eta_k\rVert_{x_k}&\leq\overline{\gamma}\lVert g_k\rVert_{x_k}+\frac{4\overline{\gamma}\overline{\nu}}{\underline{\nu}}\lVert g_k\rVert_{x_k}+\frac{1}{\underline{\tau}\underline{\nu}}\lVert g_k\rVert_{x_k}+\frac{4\overline{\gamma}\overline{\nu}^2}{\underline{\nu}^2}\lVert g_k\rVert_{x_k}\\
        &+\frac{4\overline{\gamma}\overline{\nu}}{\underline{\nu}}\lVert g_k\rVert_{x_k}+3\overline{\gamma}\lVert g_k\rVert_{x_k},
    \end{align*}
    which, together with $\lVert g_k\rVert_{x_k}\leq\Gamma$, give
    \begin{align*}
        \lVert\eta_k\rVert_{x_k}\leq\underbrace{\left(4\overline{\gamma}+
        \frac{8\overline{\gamma}\overline{\nu}}{\underline{\nu}}+\frac{1}{\underline{\tau}\underline{\nu}}+\frac{4\overline{\gamma}\overline{\nu}^2}{\underline{\nu}^2}\right)}_{\Theta}\Gamma=\Theta\Gamma.
    \end{align*}
    To prove convergence by contradiction, suppose that there exists a positive constant $\varepsilon>0$ such that
    \begin{align*}
        \lVert g_k\rVert_{x_k}\geq\varepsilon,
    \end{align*}
    for all $k$. From Proposition \ref{prop:sufficient},
    \begin{align*}
        \langle g_k,\eta_k\rangle_{x_k}\leq-\kappa\lVert g_k\rVert_{x_k}^2\leq-\kappa\varepsilon^2,
    \end{align*}
    where
    \begin{align*}
        \kappa:=\min\left\{\frac{3\underline{\gamma}(1-\overline{\xi})}{4},\underline{\gamma}\left(1-\frac{\overline{\phi}^2}{4}\right)\right\}>0.
    \end{align*}
    It follows from the above inequalities that 
    \begin{align*}
        \infty=
        \sum_{k=0}^\infty\frac{\kappa^2\varepsilon^4}{\Theta^2\Gamma^2}\leq
        \sum_{k=0}^\infty\frac{\langle g_k,\eta_k\rangle_{x_k}^2}{\lVert \eta_k\rVert_{x_k}^2}.
    \end{align*}
This contradicts the Zoutendijk theorem (Theorem \ref{thm:zoutendijk}) and thus completes the proof.\qed
\end{proof}

\section{Numerical experiments}\label{sec:numerical}
We compared the proposed method with existing methods, including the Riemannian conjugate gradient methods and memoryless spectral-scaling Broyden family.
In the experiments, we implemented the proposed method as an optimizer of \texttt{pymanopt} (see \cite{townsend2016pymanopt}) and solved two Riemannian optimization problems (Problems \ref{pbl:rayleigh} and \ref{pbl:off-diag}).

Problem \ref{pbl:rayleigh} is the Rayleigh-quotient minimization problem on the unit sphere \cite[Chapter 4.6]{absil2008optimization}.
\begin{problem}\label{pbl:rayleigh}
Let $A\in\mathbb{R}^{n\times n}$ be a symmetric matrix,
    \begin{align*}
        \text{minimize }&f(x):=x^\top Ax, \\
        \text{subject to }&x\in\mathbb{S}^{n-1}:=\{x\in\mathbb{R}^n:\lVert x\rVert=1\},
    \end{align*}
where $\lVert\cdot\rVert$ denotes the Euclidean norm.
\end{problem}
In the experiments, we set $n=100$ and generated a matrix $B\in\mathbb{R}^{n\times n}$ with randomly chosen elements by using \texttt{numpy.random.randn}.
Then, we set a symmetric matrix $A=(B+B^\top)/2$.

Absil and Gallivan \cite[Section 3]{absil2006joint} introduced an off-diagonal cost function.
Problem \ref{pbl:off-diag} is an off-diagonal cost function minimization problem on an oblique manifold.
\begin{problem}\label{pbl:off-diag}
Let $C_i\in\mathbb{R}^{n\times n}$ $(i=1,\cdots,N)$ be symmetric matrices.
    \begin{align*}
        \text{minimize }&f(X):=\sum_{i=1}^N\lVert X^\top C_iX-\mathrm{ddiag}(X^\top C_iX)\rVert_F^2, \\
        \text{subject to }&X\in\mathcal{OB}(n,p):=\{X\in\mathbb{R}^{n\times p}:\mathrm{ddiag}(X^\top X)=I\},
    \end{align*}
where $\lVert\cdot\rVert_F$ denotes the Frobenius norm and $\mathrm{ddiag}(M)$ denotes a diagonal matrix $M$ with all its off-diagonal elements set to zero.
\end{problem}
In the experiments, we set $N=5$, $n=10$, and $p=5$ and generated five matrices $B_i\in\mathbb{R}^{n\times n}$ $(i=1,\cdots,5)$
with randomly chosen elements by using \texttt{numpy.random.randn}.
Then, we set symmetric matrices $C_i=(B_i+B_i^\top)/2$ $(i=1,\cdots,5)$.

The experiments used a MacBook Air (M1, 2020) with version 12.2 of the macOS Monterey operating system.
The algorithms were written in Python 3.11.3 with the NumPy 1.25.0 package and the Matplotlib 3.7.1 package.
Python implementations of the methods used in the numerical experiments are available at \texttt{https://github.com/iiduka-researches/202307-memoryless}.

We considered that a sequence had converged to an optimal solution when the stopping condition,
\begin{align*}
    \lVert\mathrm{grad}f(x_k)\rVert_{x_k}<10^{-6},
\end{align*}
was satisfied.
We set $\mathscr{T}^{(k-1)}=\mathcal{T}_{\alpha_{k-1}\eta_{k-1}}^R(\cdot)$,
\begin{align*}
    \gamma_{k-1}=\max\left\{1,\frac{s_{k-1}^\flat z_{k-1}}{z_{k-1}^\flat z_{k-1}}\right\}, \quad
    \tau_{k-1}=\min\left\{1,\frac{z_{k-1}^\flat z_{k-1}}{s_{k-1}^\flat z_{k-1}}\right\}.
\end{align*}
We compared the proposed methods with the existing Riemannian optimization algorithms,
including Riemannian conjugate gradient methods. Moreover, we compared twelve versions of the proposed method with different parameters,
i.e., $\phi_{k-1}$, $z_{k-1}$ and $\xi_{k-1}$. We compared the BFGS formula $\phi_{k-1}=1$ and the preconvex class $\phi_{k-1}\in[0,\infty)$.
For the preconvex class (see \cite[(43)]{narushima2023memoryless}), we used
\begin{align*}
    \phi_{k-1}=\frac{0.1\theta^\ast_{k-1}-1}{0.1\theta^\ast_{k-1}(1-\mu_{k-1})-1},
\end{align*}
where
\begin{align*}
    \theta_{k-1}^\ast=\max\left\{\frac{1}{1-\mu_{k-1}},10^{-5}\right\},
    \quad
    \mu_{k-1}=\frac{(s_{k-1}^\flat s_{k-1})(z_{k-1}^\flat z_{k-1})}{(s_{k-1}^\flat z_{k-1})^2}.
\end{align*}
Moreover, we compared Li-Fukushima regularization \eqref{eq:li-fukushima-1} and \eqref{eq:li-fukushima-2} with $\hat{\nu}=10^{-6}$
and Powell’s damping technique \eqref{eq:powell-1} and \eqref{eq:powell-2} with $\hat{\nu}=0.1$.
In addition, we used a constant parameter $\xi_{k-1}=\xi\in[0,1]$ and compared our methods with $\xi=1$ (i.e., the existing methods when $\mathscr{T}^{(k)}=\mathcal{T}_{\alpha_{k-1}\eta_{k-1}}(\cdot)$),
$\xi=0.8$, and $\xi=0.1$. For comparison, we also tested two Riemannian conjugate gradient methods,
i.e., DY \eqref{eq:DY} and HZ \eqref{eq:HZ}.

As the measure for these comparisons, we calculated the performance profile $P_s:\mathbb{R}\to[0,1]$ \cite{dolan2002benchmarking} defined as follows:
let $\mathcal{P}$ and $\mathcal{S}$ be the sets of problems and solvers, respectively.
For each $p\in\mathcal{P}$ and $s\in\mathcal{S}$, 
\begin{align*}
    t_{p,s}:=(\text{iterations or time required to solve problem }p\text{ by solver }s).
\end{align*}
We defined the performance ratio $r_{p,s}$ as
\begin{align*}
    r_{p,s}:=\frac{t_{p,s}}{\min_{s^\prime\in\mathcal{S}}t_{p,s^\prime}}.
\end{align*}
Next, we defined the performance profile $P_s$ for all $\tau\in\mathbb{R}$ as
\begin{align*}
    P_s(\tau):=\frac{\lvert\{p\in\mathcal{P}:r_{p,s}\leq\tau\}\rvert}{\lvert\mathcal{P}\rvert},
\end{align*}
where $\lvert A\rvert$ denotes the number of elements in a set $A$.
In the experiments, we set $\lvert\mathcal{P}\rvert=100$ for Problems \ref{pbl:rayleigh} and \ref{pbl:off-diag}, respectively.

Figures \ref{fig:rayleigh_f}--\ref{fig:off_p} plot the results of our experiments.
In particular, Figure \ref{fig:rayleigh_f} shows the numerical results for Problem \ref{pbl:rayleigh} with Li-Fukushima regularization \eqref{eq:li-fukushima-1} and \eqref{eq:li-fukushima-2}.
It shows that Algorithm \ref{alg:memoryless} with $\xi=0.1$ has much higher performance than that of Algorithm \ref{alg:memoryless} with $\xi=1$
(i.e., the existing method) regardless of whether the BFGS formula or the preconvex class is used.
In addition, we can see that Algorithm \ref{alg:memoryless} with $\xi=0.8$ and $\xi=1$ have about the same performance.

\begin{figure}[htbp]
\centering
\subfigure[iteration]{\includegraphics[width=55mm]{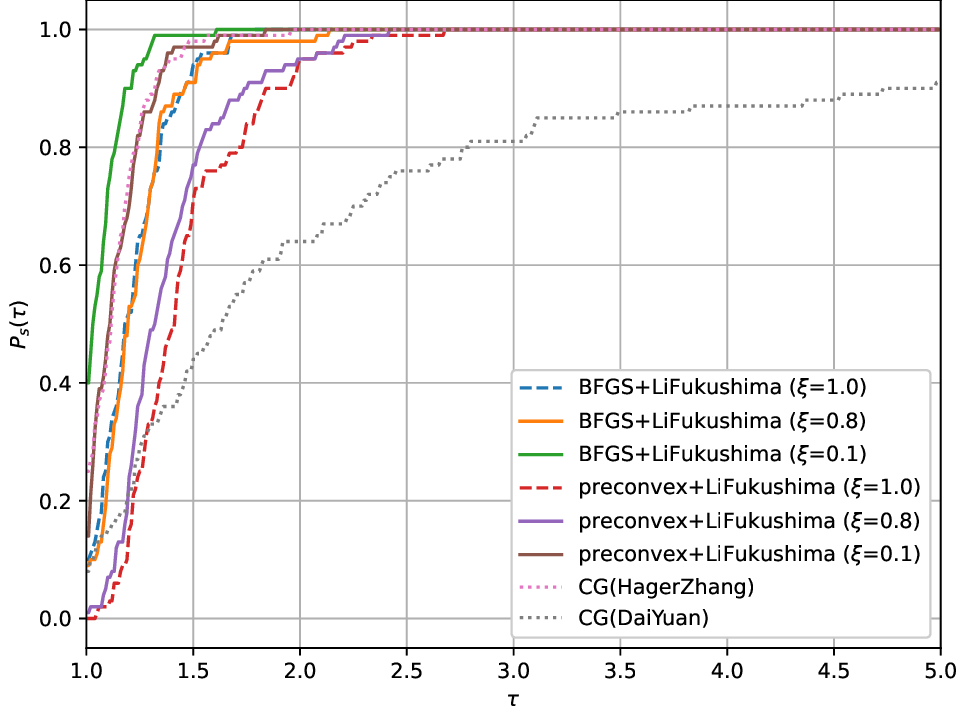}}
\subfigure[elapsed time]{\includegraphics[width=55mm]{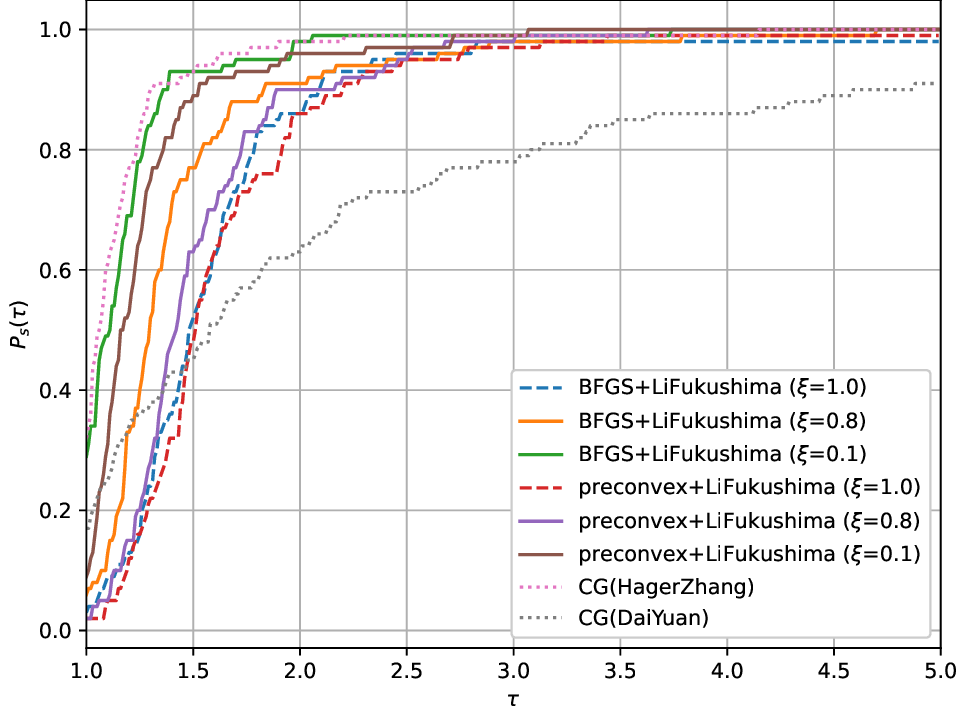}}
\caption{Performance profiles of each algorithm versus the number of iterations (a) and the elapsed time (b) for Problem \ref{pbl:rayleigh}. $z_k$ is defined by Li-Fukushima regularization \eqref{eq:li-fukushima-1} and \eqref{eq:li-fukushima-2}.\label{fig:rayleigh_f}}
\end{figure}

Figure \ref{fig:rayleigh_p} shows the numerical results
for solving Problem \ref{pbl:rayleigh} with Powell’s damping technique \eqref{eq:powell-1} and \eqref{eq:powell-2}.
It shows that Algorithm \ref{alg:memoryless} with $\xi=0.1$ is superior to Algorithm \ref{alg:memoryless} with $\xi=1$ (i.e., the existing method),
regardless of whether the BFGS formula or the preconvex class is used.
Moreover, it can be seen that Algorithm \ref{alg:memoryless} with $\xi=0.8$ and $\xi=1$ has about the same performance.

\begin{figure}[htbp]
\centering
\subfigure[iteration]{\includegraphics[width=55mm]{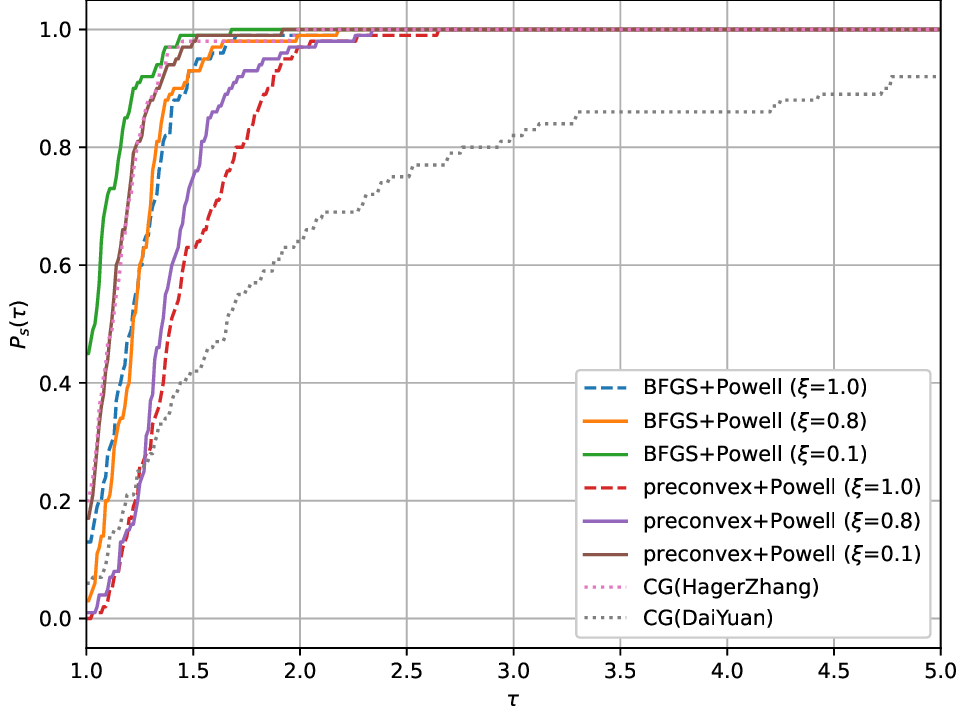}}
\subfigure[elapsed time]{\includegraphics[width=55mm]{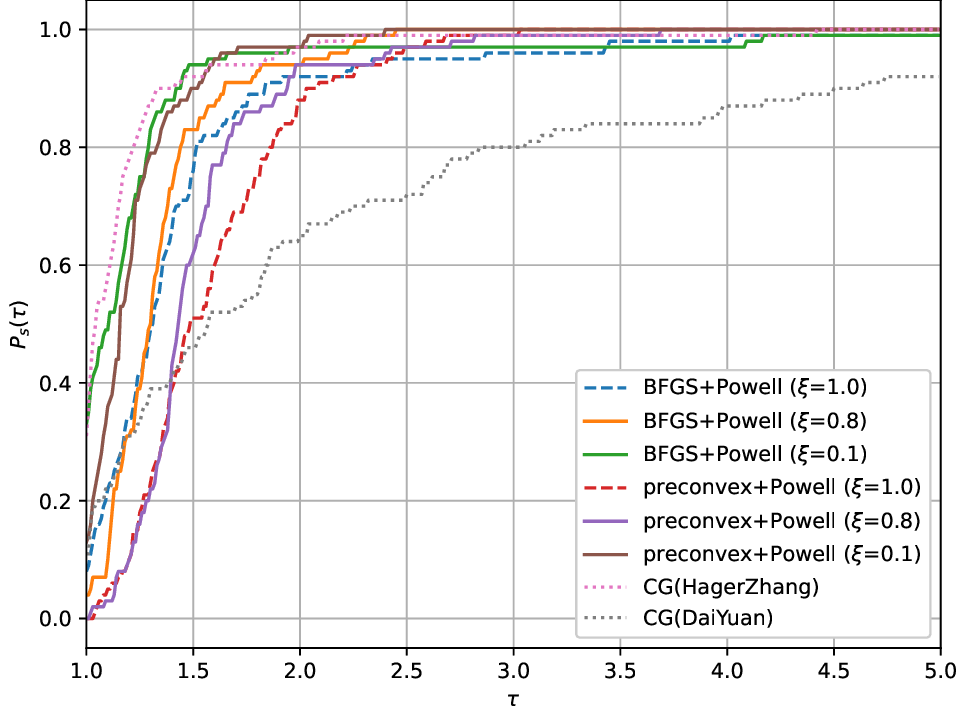}}
\caption{Performance profiles of each algorithm versus the number of iterations (a) and the elapsed time (b) for Problem \ref{pbl:rayleigh}. $z_k$ is defined by Powell’s damping technique \eqref{eq:powell-1} and \eqref{eq:powell-2}. \label{fig:rayleigh_p}}
\end{figure}

Figure \ref{fig:off_f} shows numerical results for Problem \ref{pbl:rayleigh} with Li-Fukushima regularization \eqref{eq:li-fukushima-1} and \eqref{eq:li-fukushima-2}.
It shows that if we use the BFGS formula (i.e., $\phi_k=1$), then Algorithm \ref{alg:memoryless} with $\xi=0.8$ and the HZ method outperform the others.
However, Algorithm \ref{alg:memoryless} with the preconvex class is not compatible with is an off-diagonal cost function minimization problem on an oblique manifold.

\begin{figure}[htbp]
\centering
\subfigure[iteration]{\includegraphics[width=55mm]{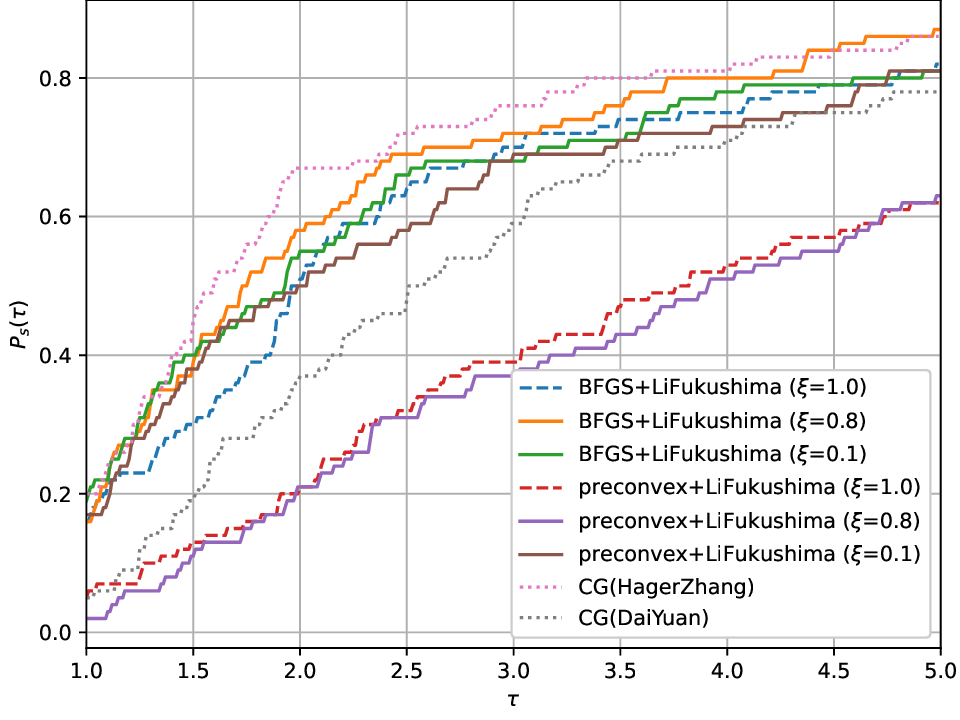}}
\subfigure[elapsed time]{\includegraphics[width=55mm]{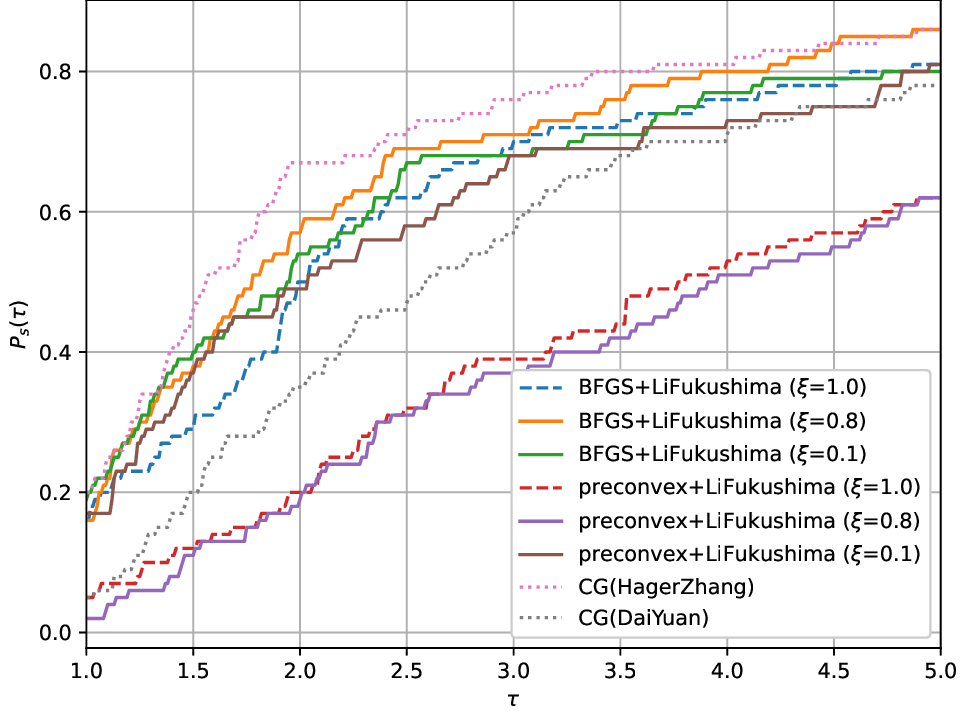}}
\caption{Performance profiles of each algorithm versus the number of iterations (a) and the elapsed time (b) for Problem \ref{pbl:off-diag}. $z_k$ is defined by Li-Fukushima regularization \eqref{eq:li-fukushima-1} and \eqref{eq:li-fukushima-2}. \label{fig:off_f}}
\end{figure}

Figure \ref{fig:off_p} shows the numerical results for solving Problem \ref{pbl:rayleigh} with Powell’s damping technique \eqref{eq:powell-1} and \eqref{eq:powell-2}.
It shows that if we use the BFGS formula (i.e., $\phi_k=1$), then Algorithm \ref{alg:memoryless} with $\xi=0.8$ or $\xi=1$ is superior to the others.
However, Algorithm \ref{alg:memoryless} with the preconvex class is not compatible with is an off-diagonal cost function minimization problem on an oblique manifold.
Therefore, we can see that Algorithm \ref{alg:memoryless} with the BFGS formula (i.e., $\phi_k=1$) is suitable for solving an off-diagonal cost function minimization problem on an oblique manifold.

\begin{figure}[htbp]
\centering
\subfigure[iteration]{\includegraphics[width=55mm]{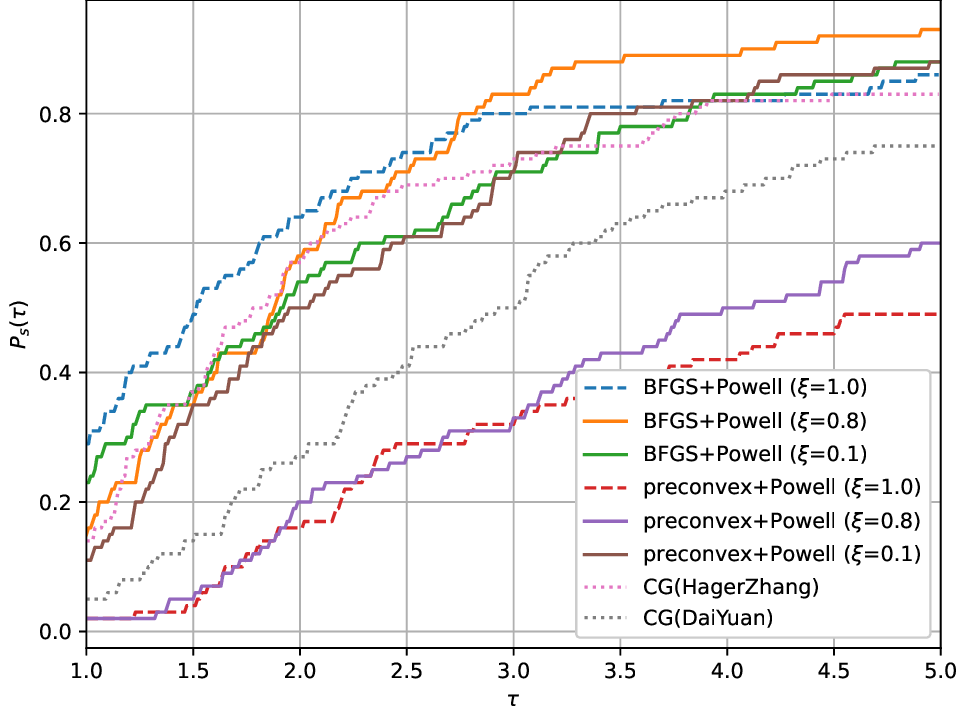}}
\subfigure[elapsed time]{\includegraphics[width=55mm]{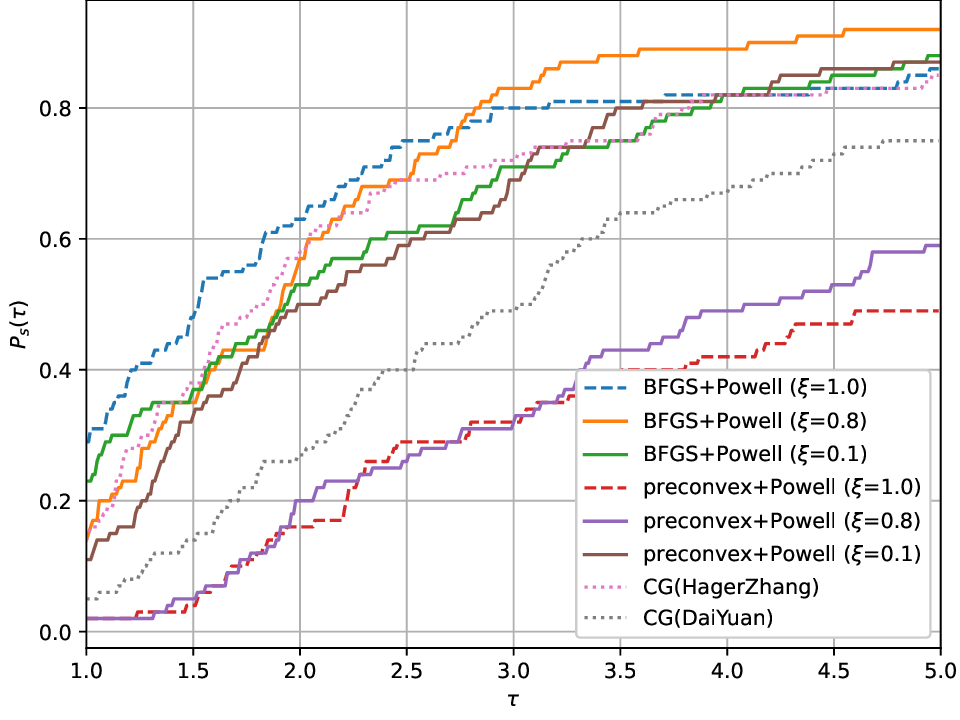}}
\caption{Performance profiles of each algorithm versus the number of iterations (a) and the elapsed time (b) for Problem \ref{pbl:off-diag}. $z_k$ is defined by Powell’s damping technique \eqref{eq:powell-1} and \eqref{eq:powell-2} \label{fig:off_p}}
\end{figure}

\section{Conclusion}\label{sec:conclusion}
This paper presented a modified memoryless quasi-Newton method with the spectral-scaling Broyden family on Riemannian manifolds,
i.e., Algorithm \ref{alg:memoryless}. Algorithm \ref{alg:memoryless} is a generalization of the memoryless self-scaling Broyden family on Riemannian manifolds.
Specifically, it involves adding one parameter to the search direction.
We use a general map instead of vector transport, similarly to the general framework of Riemannian conjugate gradient methods.
Therefore, we can utilize methods that use vector transport, scaled vector transport, or an inverse retraction.
Moreover, we proved that the search direction satisfies the sufficient descent condition, and the method globally converges under the Wolfe conditions.
Moreover, the numerical experiments indicated that the proposed method with the BFGS formula is suitable for solving an off-diagonal cost function minimization problem on an oblique manifold.

\section*{Acknowledgements}
This work was supported by a JSPS KAKENHI Grant Number JP23KJ2003.

\section*{Data availability statements}
Data sharing is not applicable to this article as no datasets were generated or analysed during the current study.

\bibliographystyle{abbrv}
\bibliography{biblib.bib}

\end{document}